\documentclass[final]{siamart171218}

\pdfoutput=1

\usepackage{bm}
\usepackage{amsmath,amssymb}
\usepackage{epsfig,graphics}
\usepackage{subfigure,sidecap}
\usepackage{wrapfig, comment}
\usepackage{colortbl,hhline}
\usepackage{mathtools}
\usepackage{paralist}

\setcounter{topnumber}{3}
\setcounter{bottomnumber}{2}
\usepackage[norefs,nocites]{refcheck}
\usepackage{autonum}


\usepackage[textsize=footnotesize]{todonotes}

 \setlength{\paperwidth}{8.5truein}
 \setlength{\paperheight}{11truein}
 \setlength{\textwidth}{6.45in}
 \setlength{\textheight}{9in}
 \setlength{\oddsidemargin}{0.0in}
 \setlength{\topmargin}{0.0in}
 \topmargin=-.55in



\newcommand{\eps}{\varepsilon}

\def\x{{\bf x}}
\def\y{{\bf y}}

\def\beq{\begin{equation}}
\def\eeq{\end{equation}}

\def\eps{\epsilon}

\def\rprime{\hbox{\hskip.25em\raise.5ex\hbox{$'$}\hskip.15em}}

\def\ddn1{{\frac{\partial}{\partial \nu_{\yb}}}}

\newcommand{\Lcal}{\mathcal{L}}
\newcommand{\Asub}{_{\text{\scriptsize A}}}
\newcommand{\shl}{\hspace*{-0.1em}<\hspace*{-0.1em}}

\DeclareMathOperator*{\argmin}{arg\;min}

\usepackage{algorithmic}

\theoremstyle{remark}

\theoremstyle{plain}
\newtheorem{thm}{Theorem}
\newtheorem{prop}[thm]{Proposition}

%
%
\newlength{\kaka}
\newcommand{\dO}{\partial\OO}
\newcommand{\G}{\Gamma}
\newcommand{\sheq}{\hspace*{-0.1em}=\hspace*{-0.1em}}
\newcommand{\shcup}{\hspace*{-0.1em}\cup\hspace*{-0.1em}}
\newcommand{\Div}{\mbox{div}\,} 
\newcommand{\bfu} {\boldsymbol{u}}
\newcommand{\bfsig}{\boldsymbol{\sigma}}
\newcommand{\bfze}{\mathbf{0}}
\newcommand{\OO}{\Omega}
\newcommand{\bfI} {\boldsymbol{I}}
\newcommand{\bfD} {\boldsymbol{D}}
\newcommand{\bfuD} {\bfu\Dsup}
\newcommand{\bfna}{\boldsymbol{\nabla}}
\newcommand{\shp}{\hspace*{-0.1em}+\hspace*{-0.1em}}
\newcommand{\Tsup}{^{\text{\scriptsize T}}}
\newcommand{\Dsup}{^{\text{\scriptsize D}}}
\newcommand{\bfn} {\boldsymbol{n}}
\newcommand{\bff} {\boldsymbol{f}}
\newcommand{\sip} {\! \cdot\!}
\newcommand{\shin}{\hspace*{-0.1em}\in\hspace*{-0.1em}}
\newcommand{\bftau}{\boldsymbol{\tau}}
\newcommand{\VS}{\text{\boldmath $\Vcal$}}
\newcommand{\Pcal}{\mathcal{P}}
\newcommand{\bfv} {\boldsymbol{v}}
\newcommand{\FS}{\ensuremath{\mbox{\boldmath $\mathcal{F}$}}}
\newcommand{\lbra}{\big\langle\hspace*{0.1em}}
\newcommand{\rbra}{\hspace*{0.1em}\big\rangle}
\newcommand{\Vcal}{\mathcal{V}}
\newcommand{\bfg} {\boldsymbol{g}}
\newcommand{\iO}{\int_{\OO}}
\newcommand{\dip} {\! :\!}
\newcommand{\dV}{\;\text{d}V}

\newcommand{\tdemi} {\tfrac{1}{2}}
\newcommand{\bfx} {\boldsymbol{x}}

\newcommand{\shm}{\hspace*{-0.1em}-\hspace*{-0.1em}}
\newcommand{\PLsub}{_{\text{\scriptsize PL}}}
\newcommand{\bfe} {\boldsymbol{e}}
\newcommand{\iG} {\int_{\G}}
\newcommand{\Vsub}{_{\text{\scriptsize V}}}
\newcommand{\inv}[1]{\dfrac{1}{#1}}
\newcommand{\Qsub}{_{\text{\scriptsize Q}}}
\newcommand{\bfth}{\boldsymbol{\theta}}
\newcommand{\Rbb} {\mathbb{R}}

\newcommand{\bfa} {\boldsymbol{a}}

\newcommand{\Ocal}{\mathcal{O}}
\newcommand{\Gpp}{\G_0}
\newcommand{\Gpm}{\G_L}

\newcommand{\Gp}{\G_{p}}
\newcommand{\dd}[1]{\protect\settowidth{\kaka}{\mbox{$#1$}}\protect\makebox[\kaka]{$\buildrel{\scriptscriptstyle\star}\over{#1}$}}

\newcommand{\bfp} {\boldsymbol{p}}
\newcommand{\lsqb}{\big[\hspace*{0.1em}}
\newcommand{\rsqb}{\hspace*{0.1em}\big]}
\newcommand{\lpar}{\big(\hspace*{0.1em}}
\newcommand{\rpar}{\hspace*{0.1em}\big)}
\newcommand{\shsubs}{\hspace*{-0.1em}\subset\hspace*{-0.1em}}
\newcommand{\der}[2]{\dfrac{\text{d}#1}{\text{d}#2}}
\newcommand{\shdeq}{\hspace*{-0.1em}:=\hspace*{-0.1em}}
\newcommand{\idO}{\int_{\dO}}
\newcommand{\Ssub}{_{\text{\scriptsize S}}}
\newcommand{\iS}{\int_{S}}
\newcommand{\DivS} {\mbox{div}_{\! S}}
\newcommand{\tens}{\hspace*{-1pt}\otimes\hspace*{-1pt}}

\newcommand{\bfxi}{\boldsymbol{\xi}}
\newcommand{\hatu}{\hat{u}}
\newcommand{\shtimes}{\hspace*{-0.1em}\times\hspace*{-0.1em}}

\newcommand{\shg}{\hspace*{-0.1em}>\hspace*{-0.1em}}
\newcommand{\bfz} {\boldsymbol{z}}
\newcommand{\shleq}{\hspace*{-0.1em}\leq\hspace*{-0.1em}}

\newcommand{\Lcb}{\Big\{\hspace*{0.1em}}
\newcommand{\Rcb}{\hspace*{0.1em}\Big\}}
\newcommand{\bfuh} {\hat{\boldsymbol{u}}{}}
\newcommand{\bffh} {\hat{\boldsymbol{f}}{}}
\newcommand{\hatp}{\hat{p}}
\newcommand{\bfA} {\boldsymbol{A}}

\newcommand{\nS}{\bfna_{\!\! S}}
\newcommand{\Lsqb}{\Big[\hspace*{0.1em}}
\newcommand{\Rsqb}{\hspace*{0.1em}\Big]}

\newcommand{\Lpar}{\Big(\,}
\newcommand{\Rpar}{\,\Big)}

\newcommand{\dx}{\,\text{d}x}
\newcommand{\fhat}{\hat{f}}
\newcommand{\phat}{\hat{p}}
\newcommand{\suite}[1][0ex]{\notag \\[#1] & \mbox{}\hspace{15pt}}
\newcommand{\bfU} {\boldsymbol{U}}

\newcommand{\ds}{\,\text{d}s}

\newtheorem{remark}{Remark}

\begin{document}             
\title{Shape optimization of Stokesian peristaltic pumps using boundary integral methods}

\author{Marc Bonnet\thanks{POEMS (CNRS, INRIA, ENSTA), ENSTA, 91120 Palaiseau, France. mbonnet@ensta.fr}
\and Ruowen Liu\thanks{Department of Mathematics, University of Michigan, Ann Arbor, United States. ruowen@umich.edu}
\and Shravan Veerapaneni\thanks{Department of Mathematics, University of Michigan, Ann Arbor, United States. shravan@umich.edu}
}

\date{\today}

\maketitle
\begin{abstract} 
This article presents a new boundary integral approach for finding optimal shapes of peristaltic pumps that transport a viscous fluid. Formulas for computing the shape derivatives of the standard cost functionals and constraints are derived. They involve evaluating physical variables (traction, pressure, etc.) on the boundary only. By employing these formulas in conjunction with a boundary integral approach for solving forward and adjoint problems, we completely avoid the issue of volume remeshing when updating the pump shape as the optimization proceeds. This leads to significant cost savings and we demonstrate the performance on several numerical examples.  
\end{abstract} 

\begin{keywords}
Shape sensitivity analysis, integral equations, fast algorithms
\end{keywords}

\maketitle

\section{Introduction}
Many physiological flows are realized owing to {\em peristalsis}, a transport mechanism induced by periodic contraction waves in fluid-filled (or otherwise) tubes/vessels \cite{jaffrin1971peristaltic, fauci2006biofluidmechanics, bornhorst2017gastric}. This mechanism is used in engineering applications like microfluidics, organ-on-a-chip devices and MEMS devices for transporting and mixing viscous fluids at small scale (e.g., see \cite{teymoori2005design, wang2006pneumatically, skafte2009multi, kim2012human, zhang2015valve, esch2015organs}). Owing to its importance in science and technological applications, numerous analytical and numerical studies were carried out in the past decades to characterize the fluid dynamics of peristalsis in various physical scenarios; some recent works include \cite{teran2008peristaltic, tripathi2014mathematical, khabazi2016peristaltic} for non-Newtonian flows and \cite{chrispell2011peristaltic, aranda2015model, khabazi2016peristaltic2} for particle transport.


Several applications---optimal transport of drug particles in blood flow \cite{mekheimer2018peristaltic}, understanding sperm motility in the reproductive tract \cite{simons2018sperm}, propulsion of soft micro-swimmers \cite{farutin2013amoeboid,shi2016study}---require scalable numerical methods that can handle arbitrary shape deformations. One of the challenges of existing mesh-based methods (e.g., finite element methods) is the high computational expense of re-meshing, needed to proceed between optimization updates or in transient solution of the forward/adjoint problems. Boundary integral equation (BIE) methods, on the other hand, avoid volume discretization altogether for linear partial differential equations (PDEs) and are highly scalable even for moving geometry problems \cite{petascale}. While BIE methods have been used widely for shape optimization problems, including in linear elasticity, acoustics, electrostatics, electromagnetics and heat flow (e.g., \cite{bordas:16, zheng:12, bandara2015boundary, lelouer:16, harbrecht2013numerical}),
 we are not aware of their application to optimization of peristaltic pumps transporting simple (or complex) fluids.   

The primary goal of this work is to derive shape sensitivity formulas that can be used in conjunction with an indirect BIE method to enable fast numerical optimization routines. Our work is inspired by that of Walker and Shelley \cite{walk:shel:10} who considered the shape optimization of a peristaltic pump transporting a Newtonian fluid at low to moderate Reynolds numbers ($Re$) and applied a finite element discretization. Here, we restrict our attention to problems in the zero $Re$ limit only. Following \cite{walk:shel:10}, we consider shapes that minimize the input fluid power under constant volume and flow rate constraints. The forward problem requires solving the Stokes equations in a tube with periodic flow conditions and prescribed slip on the walls while the adjoint problem has a prescribed pressure drop condition across the tube. For both problems, we employ the recently developed periodic BIE solver of Marple et al. \cite{barnett:16}; the primary advantage compared to classical BIE methods that rely on periodic Green's functions is that the required slip or pressure-drop conditions can be applied directly.

The paper is organized as follows. In Section \ref{sc:formulation}, we define the shape optimization problem and the PDE formulation of the forward problem in strong and weak forms. In Section \ref{sc:sensitivities}, we derive the shape sensitivity formulas for a specific objective function and the functionals required to impose the given constraints on the pump shape. Using these formulas, we present a numerical optimization procedure in Section \ref{sc:numerical} based on a periodic boundary integral equation formulation for the PDE solves. We present validation and shape optimization results in Section \ref{sc:results} followed by conclusions in Section \ref{sc:conclusions}.

\section{Problem formulation} \label{sc:formulation}
\subsection{Formulation of the wall motion}

Pumping is achieved by the channel wall shape moving along the positive direction $\bfe_1$ at a constant velocity $c$, as a traveling wave of wavelength $L$ (the wave period therefore being $T=L/c$). The quantities $L,c$ (and hence $T$) are considered as fixed in the wall shape optimization process. This apparent shape motion is achieved by a suitable material motion of the wall, whose material is assumed to be flexible but inextensible. Like in~\cite{walk:shel:10}, it is convenient to introduce a \emph{wave frame} that moves along with the traveling wave, i.e. with velocity $c\bfe_1$ relative to the (fixed) lab frame.\enlargethispage*{1ex}

\begin{figure}[h]
\begin{center}
  \includegraphics[width=0.6\textwidth]{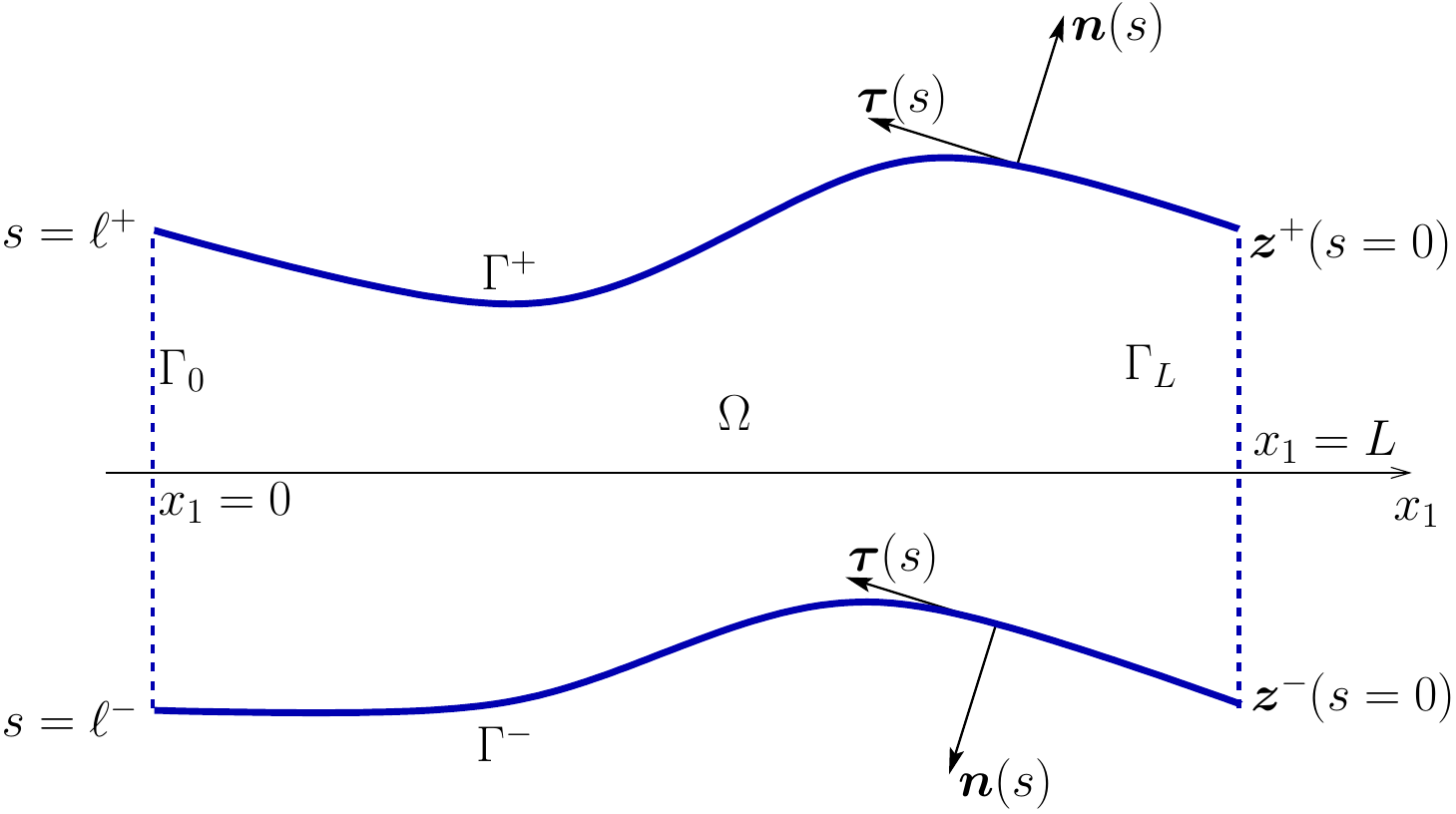}
  \caption{Channel in wave frame, 2D configuration: geometry and notation}\label{geom:2D}
\end{center}
\end{figure} 

Let then $\Omega$ denote, in the wave frame, the fluid region enclosed in one wavelength of the channel (see Fig.~\ref{geom:2D}), whose boundary is $\dO=\G\cup\G_p$. The wall $\G\shdeq\G^{+}\shcup\G^{-}$, which is fixed in this frame, has disconnected components $\G^{\pm}$ which are not required to achieve symmetry with respect to the $x_1$ axis and have respective lengths $\ell^{\pm}$. The  remaining contour $\G_p\shdeq\G_0\shcup\G_L$ consists of the periodic planar end-sections $\G_0$ and $\G_L$, respectively situated at $x_1=0$ and $x_1=L$; the endpoints of $\G_L$ are denoted by $\bfz^{\pm}$ (Fig.~\ref{geom:2D}). Both channel walls are described as arcs $s\mapsto \bfx^{\pm}(s)$ with the arclength coordinate $s$ directed ``leftwards'' as depicted in~Fig.~\ref{geom:2D}, whereas the unit normal $\bfn$ to $\dO$ is everywhere taken as outwards to $\OO$. The position vector $\bfx(s)$, unit tangent $\bftau(s)$, unit normal $\bfn(s)$ and curvature $\kappa(s)$ obey the Frenet formulas
\begin{equation}
  \bfx_{,s} = \bftau, \qquad \bftau_{,s} = \kappa\bfn, \qquad
  \bfn_{,s} = -\kappa\bftau \qquad \text{on $\G^+$ and $\G^-$}. \label{frenet}
\end{equation}
The opposite orientations of $(\bftau,\bfn)$ on $\G^+$ and $\G^-$ resulting from our choice of conventions imply opposite sign conventions on the curvature, which is everywhere on $\G$ taken as $\kappa\shdeq\bftau_{,s}\sip\bfn$ for overall consistency.

In the wall frame, the wall material velocity must be tangent to $\G$ (wall material points being constrained to remain on the surface $\G$); moreover the wall material is assumed to be inextensible. In the wave frame, the wall material velocities $\bfU$ satisfying both requirements must have, on each wall, the form
\begin{equation}
  \bfU(s) = U\bftau(s),
\end{equation}
where $U$ is a constant. Moreover, in the wave frame, all wall material points travel over an entire spatial period during the time interval $T\sheq L/c$, which implies $U=c\ell/L$. Finally, the viscous fluid must obey a no-slip condition on the wall, so that the velocity of fluid particles adjacent to $\bfx(s)$ is $\bfU(s)$. Concluding, the pumping motion of the wall constrains on each wall the fluid motion through
\begin{equation}
  \bfu(\bfx) = \bfuD(\bfx) := \frac{\ell^{\pm}}{L}c\bftau^{\pm}(\bfx) ,\quad \bfx\in\G^{\pm}, \label{noslip:2D}
\end{equation}

\begin{remark}\label{noslip:remark}
The no-slip condition $\bfu\sheq\bftau$ proposed in~\cite{walk:shel:10}, which corresponds with the present notations to setting $U\sheq c$, is inconsistent with the period $T\sheq L/c$ of the traveling wave, unless the channel is straight ($\ell^{\pm}\sheq L$). This discrepancy alters the boundary condition~\eqref{noslip:2D} and hence the shape optimization problem (the value of the power loss functional for given wall shape being affected in wall shape-dependent fashion).
\end{remark}

\subsection{PDE formulation of forward problem}
\label{forward:pde}

The stationary Stokes flow in the wave frame~\cite{walk:shel:10} solves the governing PDE formulation
\begin{equation}
\begin{aligned}
  \text{(a) } && \Div\bfsig[\bfu,p] &= \bfze && \text{in $\OO$} && \text{(balance of momentum)} \\
  \text{(b) } && \bfsig[\bfu,p] &= -p\bfI + 2\mu\bfD[\bfu] && \text{in $\OO$} && \text{(constitutive relation)} \\
  \text{(c) } && \Div\bfu &= 0 && \text{in $\OO$} && \text{(incompressibility)} \\
  \text{(d) } && \bfu &= \bfuD && \text{on $\G$} && \text{(Dirichlet BC on wall)} \\
  \text{(e) } && \bfu &\text{\ periodic} && \text{on $\G_p$}
\end{aligned} \label{forward:PDE}
\end{equation}
where $\bfu$, $p$ are the velocity and pressure, $\bfD[\bfu]:=(\bfna\bfu\shp\bfna\Tsup\bfu)/2$ is the strain rate tensor, and $\bfsig[\bfu,p]$ is the stress tensor. Equations~(\ref{forward:PDE}a,b,c) respectively express the local balance of momentum (absent any body forces), the constitutive relation ($\mu$ being the dynamic viscosity) and the incompressibility condition. The prescribed wall velocity $\bfuD$ is given by~\eqref{noslip:2D}. Problem~\eqref{forward:PDE} defines $p$ up to an arbitrary additive constant.\enlargethispage*{1ex}

\subsection{Weak formulation of forward problem}
\label{Forward:weak}

In this study, flow computations rely on a boundary integral equation formulation, see Section~\ref{sec:forwardsolver}. It is however convenient, for later establishment of shape derivative identities, to recast the forward problem~\eqref{forward:PDE} as a mixed weak formulation (e.g. \cite{brez:fort:91}, Chap. 6):
\begin{equation}
\text{find }\bfu\shin\VS, \ p\shin\Pcal, \ \bff\shin\FS, \qquad\left\{
\begin{aligned}
  \text{(a) \ }&& a(\bfu,\bfv) - b(\bfv,p) - \lbra\bff,\bfv\rbra_{\G} &= 0
  &\qquad& \forall\bfv\shin\VS \\
  \text{(b) \ }&& b(\bfu,q) &= 0 && \forall q\shin\Pcal \\
  \text{(c) \ }&& \lbra\bfuD,\bfg\rbra_{\G} - \lbra\bfu,\bfg\rbra_{\G} &= 0 && \forall\bfg\shin\FS
\end{aligned} \right. \label{forward:weak}
\end{equation}
where $\lbra\cdot,\cdot\rbra_{\G}$ stands for the $L^2(\G)$ duality product, and the bilinear forms $a$ and $b$ are defined by
\begin{equation}
  a(\bfu,\bfv) = \iO 2\mu\bfD[\bfu]\dip\bfD[\bfv] \dV, \qquad b(\bfv,q) = \iO q\,\Div\bfv \dV
\end{equation}
The function spaces in problem~\eqref{forward:weak} are as follows: $\VS$ is the space of all periodic vector fields contained in $H^1(\OO;\Rbb^2)$, $\Pcal$ is the space of all $L^2(\OO)$ functions with zero mean (i.e. obeying the constraint $\lbra p,1 \rbra_{\OO}\sheq0$) and $\FS\sheq H^{-1/2}(\G;\Rbb^2)$. The chosen definition of $\Pcal$ caters for the fact that $p$ would otherwise be defined only up to arbitrary additive constants. The Dirichlet boundary condition~(\ref{forward:PDE}d) is (weakly) enforced explicitly through~(\ref{forward:weak}c), rather than being embedded in the velocity solution space $\VS$, as this will facilitate the derivation of shape derivative identities. The unknown $\bff$, which acts as the Lagrange multiplier associated with the Dirichlet boundary condition~(\ref{forward:PDE}d), is in fact the force density (i.e stress vector) $\bfsig[\bfu,p]\sip\bfn$.\enlargethispage*{1ex}

\subsection{Formulation of optimization problem, objective functional}
\label{optim:pb}

We address the problem of finding the channel shape that makes peristaltic pumping most efficient, i.e., leads to minimum power loss for given mass flow rate. The power loss cost functional~\cite{walk:shel:10} is defined by
\begin{equation}
  J\PLsub(\OO) := \lbra \bff_{\OO},(\bfuD\shp c\bfe_1) \rbra_{\G}. \label{J:loss:def}
\end{equation}
It is a shape functional in that its value is completely determined by the domain $\OO$ (in a partly implicit way through $\bff_{\OO}$). The minimization of $J\PLsub(\OO)$, as in~\cite{walk:shel:10}, subject to two constraints, involvesd two other shape functionals:
\begin{enumerate}[(a)]
\item The channel volume per wavelength has a prescribed value $V_0$:
\begin{equation}
  C\Vsub(\OO) := |\Omega| - V_0 = 0 \label{vol:channel}
\end{equation}
\item The mass flow rate per wavelength $Q(\Omega)$ has a prescribed value $Q_0$. Expressing this constraint in the wave frame gives:
\begin{equation}
  C\Qsub(\OO) := Q(\OO) - Q_0 = 0 \qquad\text{with}\quad
  Q(\OO) = \inv{L}\iO (\bfu_{\OO}\shp c\bfe_1)\sip\bfe_1 \dV. \label{flowrate}
\end{equation}
\end{enumerate}

\begin{remark}\label{JPL}
The power loss functional $J\PLsub(\OO)$ is insensitive to modification of $p$ by an additive constant. This is seen, for instance, by combining~(\ref{forward:weak}a) with $\bfv\sheq\bfu\shp c\bfe_1$ and~(\ref{forward:weak}b) with $q\sheq p$, which yields
\begin{equation}
   J\PLsub(\OO) = \lbra \bff_{\OO},(\bfuD\shp c\bfe_1) \rbra_{\G} = a(\bfu_{\OO},\bfu_{\OO}).
\end{equation}
\end{remark}

\section{Shape sensitivities} \label{sc:sensitivities}
In this section, we review available shape derivative concepts that address our needs. Rigorous expositions of shape sensitivity theory can be found in~\cite[Chaps. 8,9]{del:zol} or \cite[Chap. 5]{henrot:pierre:18}.

\subsection{Shape sensitivity analysis: an overview}

Let $\OO_{\text{all}}\shsubs\Rbb^2$ denote a fixed domain chosen so that $\OO\Subset\OO_{\text{all}}$ always holds for the shape optimization problem of interest. Shape changes are described with the help of \emph{transformation velocity} fields, i.e. vector fields $\bfth:\OO_{\text{all}}\to\Rbb^2$ such that $\bfth=\bfze$ in a neighborhood $\dO_{\text{all}}$. Then, shape perturbations of domains $\OO\Subset\OO_{\text{all}}$ can be mathematically described using a pseudo-time $\eta$ and a geometrical transform of the form
\begin{equation}
  \bfx\shin\OO_{\text{all}} \mapsto \bfx^\eta = \bfx+\eta\bfth(\bfx), \label{shape:pert}
\end{equation}
which defines a parametrized family of domains $\OO_{\eta}(\bfth):=(\bfI\shp\eta\bfth)(\OO)$ for any given ``initial'' domain $\OO\Subset\OO_{\text{all}}$. The affine format~\eqref{shape:pert} is sufficient for defining the first-order derivatives at $\eta\sheq0$ used in this work.

\subsubsection*{Assumptions on shape transformations, admissible shapes} The set $\Ocal$ of admissible shapes for the fluid region $\OO$ in a channel period (in the wave frame) is defined as
\begin{equation}
  \Ocal = \big\{ \OO\subset\OO_{\text{all}}, \text{ $\OO$ is periodic and simply connected, }|\OO|=V_0, \ Q(\OO)=Q_0  \big\}.
  \label{Ocal:def}
\end{equation}
Accordingly, let the space $\Theta$ of admissible transformation velocities be defined as
\begin{equation}
  \Theta = \big\{ \bfth\in W^{1,\infty}(\OO_{\text{all}})\ \text{such that \ (i) }
  \bfth|_{\Gpp}=\bfth|_{\Gpm}, \ \text{(ii) }\bfth\sip\bfe_1=0 \text{ on $\Gpp$},\ \text{(iii) }\bfth(\bfz^-)=\bfze \big\},
\label{Theta:def}
\end{equation}
ensuring that the shape perturbations (i) are periodic, (ii) prevent any deformation of the end sections $\Gp^{\pm}$ along the axial direction, and (iii) prevent vertical rigid translations of the channel domain. The provision $\bfth\in W_0^{1,\infty}(\OO_{\text{all}})$ ensures that (a) there exists $\eta_0\shg0$ such that $\OO_{\eta}(\bfth)\Subset\OO_{\text{all}}$ for any $\eta\shin[0,\eta_0]$, and (b) the weak formulation for the shape derivative of the forward solution is well defined if the latter belongs to the standard solution space. Note that the volume and flow rate constraints~\eqref{vol:channel}, \eqref{flowrate} are not embedded in $\Theta$; they will be accounted for via a Lagrangian.\enlargethispage*{5ex}

\subsubsection*{Material derivatives} In what follows, all derivatives are implicitly taken at some given configuration $\OO$, i.e. at initial "time" $\eta\sheq0$. The ``initial'' material derivative $\dd{\bfa}$ of some (scalar or tensor-valued) field variable $\bfa(\bfx,\eta)$ is defined as
\begin{equation}
  \dd{\bfa}(\bfx)
 = \lim_{\eta\to0} \inv{\eta}\lsqb \bfa(\bfx^{\eta},\eta)-\bfa(\bfx,0) \rsqb \qquad\bfx\shin\OO, \label{dd:def}
\end{equation}
and the material derivative of gradients and divergences of tensor fields are given by
\begin{equation}
  \text{(a) \ }(\bfna\bfa)^{\star} = \bfna\dd{\bfa} - \bfna\bfa\sip\bfna\bfth, \qquad
  \text{(b) \ }(\Div\bfa)^{\star} = \Div\dd{\bfa} - \bfna\bfa\dip\bfna\bfth \label{dd:grad}
\end{equation}

Likewise, the first-order ``initial'' derivative $J'$ of a shape functional $J:\Ocal\to\Rbb$ is defined as
\begin{equation}
  \lbra J'(\OO),\bfth \rbra = \lim_{\eta\to0} \inv{\eta}\lpar J(\OO_{\eta}(\bfth))-J(\OO) \rpar. \label{dd:J:def}
\end{equation}

\subsubsection*{Material differentiation of integrals} Consider, for given transformation velocity field $\bfth\shin\Theta$, generic domain and contour integrals
\begin{equation}
  \text{(a) \ }I\Vsub(\eta) = \int_{\OO_{\eta}(\bfth)} F(\cdot,\eta) \dV, \qquad
  \text{(b) \ }I\Ssub(\eta) = \int_{S_{\eta}(\bfth)} F(\cdot,\eta) \ds, \label{IVS:def}
\end{equation}
where $\OO_{\eta}(\bfth)=(\bfI\shp\eta\bfth)(\OO)$ is a variable domain and $S_{\eta}(\bfth)\shdeq(\bfI\shp\eta\bfth)(S)$ a (possibly open) variable curve. The derivatives of $I\Vsub(\eta)$ and $I\Ssub(\eta)$ are given by the material differentiation identities
\begin{equation}
  \text{(a) \ } \der{I\Vsub}{\eta}\Big|_{\eta=0} = \iO \lsqb \dd{F} + F(\cdot,0)\,\Div\bfth \rsqb \dV, \qquad
  \text{(b) \ } \der{I\Ssub}{\eta}\Big|_{\eta=0} = \iS \lsqb \dd{F} + F(\cdot,0)\,\DivS\bfth \rsqb \ds, \label{dd:IVS}
\end{equation}
which are well-known material differentiation formulas of continuum kinematics. In~(\ref{dd:IVS}b), $\DivS$ stands for the tangential divergence operator, which in the present 2D context is given, with $\kappa$ denoting the curvature (see Sec.~\ref{app}) by
\begin{equation}
  \DivS\bfth = \lpar\bfI\shm\bfn\tens\bfn\rpar\dip\bfna\bfth = \partial_s\theta_s-\kappa\theta_n
\end{equation}
where we have set $\bfth$ on $\G$, using the notations and conventions introduced in~\eqref{frenet}, in the form
\begin{equation}
  \bfth=\theta_s\bftau+\theta_n\bfn. \label{theta:proj}
\end{equation}

\subsubsection*{Shape functionals and structure theorem} The structure theorem for shape derivatives (see e.g.~\cite[Chap. 8, Sec. 3.3]{del:zol}) then states that the derivative of any shape functional $J$ is a linear functional in the normal transformation velocity $\theta_n\sheq\bfn\sip\bfth|_{\dO}$. For PDE-constrained shape optimization problems involving sufficiently smooth domains and data, the derivative $\lbra J'(\OO),\bfth \rbra$ has the general form
\begin{equation}
  \lbra J'(\OO),\bfth \rbra = \idO g\, \theta_n \ds \label{shape:structure}
\end{equation}
where $g$ is the \emph{shape gradient} of $J$. The structure theorem intuitively means that (i) the shape of $\OO_{\eta}$ is determined by that of $\dO_{\eta}$, and (ii) tangential components of $\bfth$ leave $\OO_{\eta}$ unchanged at leading order $O(\eta)$.\enlargethispage*{3ex}

\subsubsection*{Example: shape derivative of channel volume constraint.} Since $|\Omega|$ is given by~(\ref{IVS:def}a) with $F=1$, this purely geometric constraint is readily differentiated using~(\ref{dd:IVS}a) and Green's theorem, to obtain
\begin{equation}
  \lbra C\Vsub'(\OO), \bfth \rbra = \iO \Div\bfth \dV = \idO \theta_n \ds = \iG \theta_n \ds, \label{dd:volume}
\end{equation}
where the last equality results from provision (ii) in~\eqref{Theta:def}.

\subsection{Shape derivative of forward solution}

The power loss and the mass flow rate constraint are expressed in terms of functionals $J\PLsub$ and $C\Qsub$ that depend on $\OO$ implicitly through the solution $(\bfu,\bff,p)$ of the forward problem. Finding their shape derivatives will then involve the solution derivative $(\bfu,\bff,p)^{\star}$. Setting up the governing problem for $(\bfu,\bff,p)^{\star}$ is then a necessary prerequisite.

\begin{prop}\label{lm1}
The governing weak formulation for the shape derivative $(\dd{\bfu},\dd{\bff},\dd{p})$ of the solution $(\bfu,\bff,p)$ of problem~\eqref{forward:weak} is
\begin{equation}
\begin{aligned}
\text{(a) \ }&&	a(\dd{\bfu},\bfv) - b(\bfv,\dd{p}) - \lbra\dd{\bff},\bfv\rbra_{\G}
 &= -a^1(\bfu,\bfv,\bfth) + b^1(\bfv,p,\bfth) + \lbra\bff,\bfv\DivS\bfth\rbra_{\G}  && \forall\bfv\shin\VS \\
\text{(b) \ }&&	b(\dd{\bfu},q) &= -b^1(\bfu,q,\bfth) && \forall q\shin\Pcal \\
\text{(c) \ }&&	\lbra\dd{\bfu}{}\Dsup,\bfg\rbra_{\G} - \lbra\dd{\bfu},\bfg\rbra_{\G} &= \lbra (\bfu\shm\bfuD)\DivS\bfth,\bfg\rbra_{\G} && \forall\bfg\shin\FS
\end{aligned} \label{der:weak}
\end{equation}
with the trilinear forms $a^1$ and $b^1$ given by
\begin{subequations}
\begin{align}
  a^1(\bfu,\bfv,\bfth)
 &= \iO 2\mu \Lcb (\bfD[\bfu]\dip\bfD[\bfv])\Div\bfth - \bfD[\bfu]\dip\lpar\bfna\bfv\sip\bfna\bfth \rpar
  - \lpar\bfna\bfu\sip\bfna\bfth \rpar\dip\bfD[\bfv] \Rcb \dV \label{a1:def} \\
  b^1(\bfu,q,\bfth)
 &= \iO q\lsqb \Div\bfu\,\Div\bfth - \bfna\bfu\dip\bfna\bfth \rsqb \dV \label{b1:def}
\end{align}
\end{subequations}
\end{prop}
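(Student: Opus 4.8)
The plan is to exploit the fact that the mixed weak formulation~\eqref{forward:weak} holds verbatim on every perturbed domain $\OO_{\eta}(\bfth)$, with solution $(\bfu^{\eta},p^{\eta},\bff^{\eta})$ and test functions $(\bfv,q,\bfg)$ drawn from the corresponding ($\eta$-dependent) spaces, and then to differentiate each of the three equations at $\eta\sheq0$. The decisive simplification is to select, for each $\eta$, test functions obtained by transporting fixed reference fields along the flow $\bfx\mapsto\bfx\shp\eta\bfth(\bfx)$, so that their material derivatives vanish: $\dd{\bfv}\sheq\bfze$, $\dd{q}\sheq0$, $\dd{\bfg}\sheq\bfze$. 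Since~\eqref{forward:weak} holds for all admissible test functions at each $\eta$, this particular choice loses no information, while it ensures that every $\eta$-derivative falls only on the solution fields and on the geometry. Provision (i) in~\eqref{Theta:def} guarantees that transported periodic fields stay periodic on the moving end-sections, so the transported triples remain admissible.

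With this choice, I would differentiate~(\ref{forward:weak}a) term by term. The domain integrals defining $a$ and $b$ are handled by the volume formula~(\ref{dd:IVS}a), which produces the extra factor $F(\cdot,0)\Div\bfth$, while the material derivatives of the integrands come from~\eqref{dd:grad}: writing $(\bfD[\bfu])^{\star}\sheq\bfD[\dd{\bfu}]\shm\tfrac12\bigl(\bfna\bfu\sip\bfna\bfth\shp(\bfna\bfu\sip\bfna\bfth)\Tsup\bigr)$ and using $\dd{\bfv}\sheq\bfze$ together with the symmetry of $\bfD[\bfu]$, the derivative of $a_{\eta}(\bfu^{\eta},\bfv^{\eta})$ splits into $a(\dd{\bfu},\bfv)$ plus exactly the three terms collected in $a^1(\bfu,\bfv,\bfth)$ of~\eqref{a1:def}. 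Likewise, $(\Div\bfv)^{\star}\sheq-\bfna\bfv\dip\bfna\bfth$ by~(\ref{dd:grad}b), which turns the derivative of $b_{\eta}(\bfv^{\eta},p^{\eta})$ into $b(\bfv,\dd{p})$ plus $b^1(\bfv,p,\bfth)$ of~\eqref{b1:def}; differentiating~(\ref{forward:weak}b) in the same way yields $b(\dd{\bfu},q)\shp b^1(\bfu,q,\bfth)\sheq0$, which is~(\ref{der:weak}b). The wall integral $\lbra\bff,\bfv\rbra_{\G}$ is differentiated by the contour formula~(\ref{dd:IVS}b), giving $\lbra\dd{\bff},\bfv\rbra_{\G}\shp\lbra\bff,\bfv\DivS\bfth\rbra_{\G}$; collecting all contributions and rearranging reproduces~(\ref{der:weak}a).

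The Dirichlet identity~(\ref{der:weak}c) comes from differentiating~(\ref{forward:weak}c), again a $\G$-integral, via~(\ref{dd:IVS}b). The two $\DivS\bfth$ terms arising from $\bfuD$ and $\bfu$ combine into $\lbra(\bfu\shm\bfuD)\DivS\bfth,\bfg\rbra_{\G}$, while the material derivative of the prescribed datum supplies the term $\lbra\dd{\bfu}{}\Dsup,\bfg\rbra_{\G}$; here $\dd{\bfu}{}\Dsup$ is obtained by differentiating the explicit expression~\eqref{noslip:2D} for $\bfuD$, i.e. from the material derivatives of $\bftau^{\pm}$ and of the wall lengths $\ell^{\pm}$, a computation I would record separately.

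The part that needs the most care is the bookkeeping of the $\eta$-dependent function spaces rather than the algebra. Transport along the flow does not preserve the zero-mean normalization defining $\Pcal$ on the moving domain, so I would have to argue that this is harmless---e.g. that the divergence equation~(\ref{forward:weak}b) is insensitive to adding a constant to $q$ because the net flux $\idO\bfu\sip\bfn$ vanishes---so that transported, generally non-zero-mean, test pressures may still be used and $\dd{p}$ is pinned only up to the same additive constant as $p$. I would similarly check that the periodicity and end-section constraints built into $\VS$ and $\Theta$ survive differentiation, guaranteeing that the pushed-forward triples span arbitrary variations in the reference spaces. Once this is settled, the identities~\eqref{der:weak} follow by matching terms, the only genuinely delicate manipulation being the symmetrization in $a^1$, where the symmetry of $\bfD[\bfu]$ and $\bfD[\bfv]$ must be used to merge the $\bfna\bfv\sip\bfna\bfth$ and $\bfna\bfu\sip\bfna\bfth$ contributions into the stated form.
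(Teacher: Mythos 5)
Your proposal is correct and follows essentially the same route as the paper: the paper's proof likewise applies the material differentiation identities~\eqref{dd:IVS} (with~\eqref{dd:grad} for the integrands) to the weak formulation~\eqref{forward:weak}, taking convected test functions so that $\dd{\bfv}\sheq\bfze$, $\dd{q}\sheq0$, $\dd{\bfg}\sheq\bfze$. Your additional bookkeeping about the zero-mean normalization of $\Pcal$ and the periodicity constraints is a careful expansion of what the paper disposes of in one sentence (the ``absence of boundary constraints'' in $\VS,\FS,\Pcal$), and your resolution of it is sound since $\Div\bfu\sheq0$ makes~(\ref{forward:weak}b) insensitive to additive constants in $q$.
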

\begin{proof}
The proposition is obtained by applying the material differentiation identities~\eqref{dd:IVS} to the weak formulation~\eqref{forward:weak}, assuming that the test functions are such that $\dd{\bfv}\sheq\bfze$, $\dd{\bfg}\sheq\bfze$ and $\dd{q}\sheq0$, i.e. are convected under the shape perturbation. The latter provision is made possible by the absence of boundary constraints in the adopted definition of $\VS,\FS,\Pcal$ (Sec.~\ref{Forward:weak}).
\end{proof}
Moreover, the material derivative $\dd{\bfu}{}\Dsup$ of the Dirichlet data $\bfuD$, involved in~(\ref{der:weak}c), is given in the following lemma, proved in Sec.~\ref{app}:
\begin{lemma}\label{lm4}
Let $\bfuD$ be the prescribed wall velocity~\eqref{noslip:2D}. On either component of the wall $\G$, we have
\[
  \text{(a) \ }\dd{\bfu}{}\Dsup
 = \frac{c\dd{\ell}}{L}\bftau + \frac{c\ell}{L}(\partial_s\theta_n + \kappa\theta_s)\bfn, \qquad
  \text{with \ (b) \ } \dd{\ell} = -\int_{0}^{\ell} \kappa\theta_n \ds
\]
\end{lemma}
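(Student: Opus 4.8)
The plan is to exploit the factored form $\bfuD = (c\ell/L)\bftau$ of the prescribed wall velocity~\eqref{noslip:2D}, in which $c$ and $L$ are held fixed while only the wall length $\ell$ and the unit tangent $\bftau$ depend on the shape. Since material differentiation obeys the product rule, I would write
$\dd{\bfu}{}\Dsup = (c/L)\big(\dd{\ell}\,\bftau + \ell\,\dd{\bftau}\big)$,
thereby reducing the lemma to two independent computations: the material derivative $\dd{\bftau}$ of the unit tangent, which will supply the normal component of~(a), and the material derivative $\dd{\ell}$ of the wall length, which will supply both the tangential coefficient in~(a) and the separate statement~(b).

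To obtain $\dd{\bftau}$, I would carry each wall by a \emph{fixed material parameter} $t$, via $\bfx^\eta(t) = \bfx(t) + \eta\,\bfth(\bfx(t))$, taking care that arclength is itself perturbed and therefore cannot be used as the parameter. The convected, unnormalized tangent is $\bfx^\eta_{,t} = \bfx_{,t} + \eta\,\partial_t[\bfth(\bfx(t))]$, and since $\partial_t[\bfth(\bfx(t))] = |\bfx_{,t}|\,\partial_s\bfth$ and $\bfx_{,t} = |\bfx_{,t}|\bftau$, differentiating the normalized tangent $\bfx^\eta_{,t}/|\bfx^\eta_{,t}|$ at $\eta = 0$ gives $\dd{\bftau} = (\bfI - \bftau\tens\bftau)\,\partial_s\bfth$, the factor $|\bfx_{,t}|$ cancelling. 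In 2D this projector is $\bfn\tens\bfn$, so $\dd{\bftau} = (\bfn\sip\partial_s\bfth)\,\bfn$. Substituting the decomposition $\bfth = \theta_s\bftau + \theta_n\bfn$ from~\eqref{theta:proj} and using the Frenet relations~\eqref{frenet} to evaluate $\bfn\sip\partial_s\bfth = \partial_s\theta_n + \kappa\theta_s$ recovers precisely the normal component in~(a).

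To obtain $\dd{\ell}$, I would write the wall length as the contour integral $\ell = \int_\G\ds$, i.e.~as (\ref{IVS:def}b) with $F\sheq1$, and apply the surface material differentiation identity~(\ref{dd:IVS}b). Because $\dd{F} = 0$, this yields $\dd{\ell} = \int_0^\ell \DivS\bfth\,\ds = \int_0^\ell(\partial_s\theta_s - \kappa\theta_n)\,\ds$, and integrating the total-derivative term reduces it to $\dd{\ell} = \big[\theta_s\big]_0^\ell - \int_0^\ell \kappa\theta_n\,\ds$.

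The \emph{main obstacle} is then to dispose of the endpoint contribution $[\theta_s]_0^\ell$, which needs the periodicity structure rather than mere smoothness. The two endpoints of each wall lie on the end-sections $\Gpp$ and $\Gpm$ and are identified under the spatial periodicity of the channel; consequently the unit tangent $\bftau$ takes equal values there (periodicity of the wall geometry), and so does $\bfth$ by provision~(i) in the definition~\eqref{Theta:def} of $\Theta$. Hence $\theta_s = \bfth\sip\bftau$ coincides at the two endpoints, so $[\theta_s]_0^\ell = 0$ and $\dd{\ell} = -\int_0^\ell\kappa\theta_n\,\ds$, which is~(b); inserting this and $\dd{\bftau}$ into the product rule gives~(a). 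Apart from the periodicity argument, the only point demanding care is the fixed-parameter computation of $\dd{\bftau}$, since treating $s$ as inert under the perturbation would produce an incorrect tangent derivative.
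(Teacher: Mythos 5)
Your proof is correct and takes essentially the same approach as the paper's: both convect the wall by a fixed material parameter (the unperturbed arclength), differentiate the expression $\bfuD=(c\ell/L)\bftau$ with respect to $\eta$, and invoke the periodicity of $\bfth$ and of the wall geometry to kill the endpoint term $[\theta_s]_0^\ell$, yielding $\dd{\ell}=-\int_0^\ell\kappa\theta_n\ds$. The only (organizational) difference is that you split the computation by the product rule into $\dd{\ell}\,\bftau$ plus $\ell\,\dd{\bftau}$ with the projector formula $\dd{\bftau}=(\bfI-\bftau\tens\bftau)\partial_s\bfth$, whereas the paper differentiates $\bfU_\eta=\frac{c\ell_\eta}{Lg_\eta}\partial_s\bfx_\eta$ in one stroke, so the tangential cancellation you get for free from orthogonality of $\dd{\bftau}$ to $\bftau$ appears there as an explicit cancellation of the $\ell(\partial_s\theta_s-\kappa\theta_n)$ terms.
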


\begin{remark}
The provision $\bfth\shin W^{1,\infty}(\OO_{\text{all}})$ in~\eqref{Theta:def} serves to ensure well-definiteness of the trilinear forms $a^1,b^1$ introduced in Proposition~\ref{lm1} for any $(\bfu,p)\shin\VS\shtimes\Pcal$.
\end{remark}
\begin{remark}
To maintain the zero-mean constraint on $p$ as $\OO$ is perturbed, the mean of $\dd{p}$ must be fixed through $\lbra \dd{p},1\rbra_{\OO}+\lbra p\Div\bfth,1\rbra_{\OO}=0$ (this provision plays no actual role in the sequel).
\end{remark}
We now state an identity involving the trilinear forms $a^1,b^1$ that will play a crucial role in the derivation of convenient shape derivative formulas for the functionals involved in this work. The proof of this result is given in Sec.~\ref{lm5:proof}\enlargethispage*{1ex}
\begin{lemma}\label{lm5}
Let $(\bfu,p)$ and $(\bfuh,\hatp)$ respectively satisfy $\Div\bfu\sheq0$, $\Div(\bfsig[\bfu,p])\sheq\bfze$ and $\Div\bfuh\sheq0$, $\Div(\bfsig[\bfuh,\hatp])\sheq\bfze$ in $\OO$, with $\bfsig[\bfu,p],\bfsig[\bfuh,\hatp]$ given for both states by the constitutive relation~(\ref{forward:PDE}b). Moreover, assume that $\bfu$, $\bfuh$ and $p$ are periodic, and set $\bff\shdeq\bfsig[\bfu,p]\sip\bfn$, $\bffh\shdeq\bfsig[\bfuh,\hatp]\sip\bfn$ and $\Delta\hatp(x_2) := \hatp(L,x_2)-\hatp(0,x_2)$ (i.e. periodicity is not assumed for $\hatp$). Then, the following identity holds:
\begin{multline}
  a^1(\bfu,\bfuh,\bfth) - b^1(\bfu,\hatp,\bfth) - b^1(\bfuh,p,\bfth) \\
 = \iG \Lcb (\bfsig[\bfu,p]\dip\bfD[\bfuh])\theta_n - \bff\sip\bfna\bfuh\sip\bfth - \bffh\sip\bfna\bfu\sip\bfth \Rcb \ds
    + \int_{\G_L} \Delta\hatp \, (\partial_2 u_1)\theta_2 \ds.
\end{multline}
\end{lemma}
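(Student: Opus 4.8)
The plan is to recognize the left-hand side as the integral over $\OO$ of a two-field energy--momentum (Eshelby-type) tensor contracted with $\bfna\bfth$, and then to integrate by parts a single time. Concretely, I would introduce the tensor $\Pi$ with Cartesian components
\[
  \Pi_{kj} = (\bfsig[\bfu,p]\dip\bfD[\bfuh])\,\delta_{kj} - (\bfsig[\bfu,p])_{ij}\,\partial_k\hatu_i - (\bfsig[\bfuh,\hatp])_{ij}\,\partial_k u_i
\]
and first verify the purely algebraic identity
\[
  a^1(\bfu,\bfuh,\bfth) - b^1(\bfu,\hatp,\bfth) - b^1(\bfuh,p,\bfth) = \iO \Pi_{kj}\,\partial_j\theta_k \dV .
\]
Expanding $\bfsig[\bfu,p]\sheq-p\bfI\shp2\mu\bfD[\bfu]$ (and likewise for $\bfsig[\bfuh,\hatp]$) in the definitions~\eqref{a1:def}--\eqref{b1:def}, one sees that the viscous part of the first term of $\Pi$ and the viscous parts of the two stress terms reproduce exactly the three terms of $a^1$, while the pressure parts of the two stress terms reproduce $-b^1(\bfuh,p,\bfth)$ and $-b^1(\bfu,\hatp,\bfth)$. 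The incompressibility constraints $\Div\bfu\sheq\Div\bfuh\sheq0$ are used here to discard the $\Div\bfuh\,\Div\bfth$ trace in the first term and the $\Div\bfu\,\Div\bfth$, $\Div\bfuh\,\Div\bfth$ contributions in the $b^1$ forms.

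The crucial step is to show that $\Pi$ is divergence-free, $\partial_j\Pi_{kj}=0$ in $\OO$. Differentiating, the momentum balances $\Div\bfsig[\bfu,p]\sheq\Div\bfsig[\bfuh,\hatp]\sheq\bfze$ annihilate the divergence of the two stress factors; the symmetry of the stress then lets me rewrite the surviving second-derivative terms $(\bfsig[\bfu,p])_{ij}\,\partial_k\partial_j\hatu_i$ as $(\bfsig[\bfu,p])_{ij}\,\partial_k\bfD[\bfuh]_{ij}$, and these precisely cancel the $\partial_k(\bfsig[\bfu,p]\dip\bfD[\bfuh])$ coming from the $\delta_{kj}$ term (and symmetrically for the hatted state), incompressibility clearing the residual pressure traces. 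It is essential to carry the \emph{full} stress $\bfsig$ rather than the viscous part $2\mu\bfD$ alone: the pressure contribution is exactly what makes this cancellation exact rather than leaving a spurious pressure--vorticity volume residual. Granting $\partial_j\Pi_{kj}=0$, one integration by parts gives $\iO \Pi_{kj}\,\partial_j\theta_k \dV = \idO \Pi_{kj}\,n_j\,\theta_k \ds$, and since $\bff\sheq\bfsig[\bfu,p]\sip\bfn$ and $\bffh\sheq\bfsig[\bfuh,\hatp]\sip\bfn$ the integrand is literally $(\bfsig[\bfu,p]\dip\bfD[\bfuh])\theta_n - \bff\sip\bfna\bfuh\sip\bfth - \bffh\sip\bfna\bfu\sip\bfth$.

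The remaining work is to split $\dO=\G\shcup\G_{p}$ and evaluate the contribution of $\G_{p}=\G_0\shcup\G_L$, since on $\G$ the integrand already is the first boundary term of the claim. Here I would invoke $\bfth\shin\Theta$: because $\theta_1\sheq\bfth\sip\bfe_1\sheq0$ on $\G_{p}$ while $\bfn\sheq\pm\bfe_1$ there, the normal velocity $\theta_n$ vanishes (killing the $(\bfsig\dip\bfD)\theta_n$ piece) and $\bfth\sheq\theta_2\bfe_2$. The periodicity of $\bfu$, $\bfuh$, $p$ (hence of $\bfsig[\bfu,p]$ and of the viscous part of $\bfsig[\bfuh,\hatp]$), together with the opposite normals $\mp\bfe_1$ on $\G_0$ and $\G_L$ and $\theta_2|_{\G_0}\sheq\theta_2|_{\G_L}$, makes the $\G_0$ and $\G_L$ contributions cancel in pairs. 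The one non-periodic ingredient is the pressure $\hatp$ inside $\bffh$; isolating its pressure part $-\hatp\bfn$ leaves $[\hatp(L,\cdot)\shm\hatp(0,\cdot)](\partial_2 u_1)\theta_2=\Delta\hatp\,(\partial_2 u_1)\theta_2$ on $\G_L$, which is exactly the asserted correction. (The solutions are taken smooth enough to justify the interior integration by parts; no tangential integration along $\dO$ is needed, so no corner terms at $\bfz^{\pm}$ arise.)

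The main obstacle I anticipate is the divergence computation $\partial_j\Pi_{kj}=0$: it forces one to use the momentum balance of \emph{both} states, the symmetry of the stress to convert the mixed second derivatives into $\partial_k\bfD$ terms, and incompressibility to remove the pressure traces, with all three acting in concert so that the quadratic terms cancel; choosing the viscous stress instead of the full stress would break this. The secondary difficulty is the careful orientation-and-periodicity bookkeeping on $\G_{p}$ needed to show that everything cancels except the single $\hatp$-jump term.
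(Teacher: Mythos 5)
Your proposal is correct and takes essentially the same route as the paper: your tensor $\Pi$ contracted with $\bfth$ is exactly the vector field $\bfA := (\bfsig[\bfu,p]\dip\bfD[\bfuh])\bfth - \bfsig[\bfu,p]\sip\bfna\bfuh\sip\bfth - \bfsig[\bfuh,\hatp]\sip\bfna\bfu\sip\bfth$ whose divergence the paper computes directly, and your split into "algebraic identity plus $\partial_j\Pi_{kj}=0$" is just the product rule applied to $\Div(\Pi\Tsup\sip\bfth)$, with the same three ingredients (momentum balance of both states, stress symmetry, incompressibility) effecting the same cancellations. The final divergence-theorem step and the periodicity/orientation bookkeeping on $\G_p$ that isolates the single $\Delta\hatp\,(\partial_2 u_1)\theta_2$ term on $\G_L$ likewise coincide with the paper's argument.
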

\begin{proof}
See Sec.~\ref{lm5:proof}
\end{proof}

\subsection{Shape derivative of power loss functional}
\label{dd:JPL}

The derivative of the cost functional~\eqref{J:loss:def} is given, using the material differentiation identity~(\ref{dd:IVS}b), by
\begin{equation}
  \lbra J'\PLsub(\OO),\bfth \rbra
 = \iG \lsqb \bff\sip\dd{\bfu}{}\Dsup + (\bfuD\shp\bfe_1) \sip \lpar \dd{\bff} + \bff\DivS\bfth \rpar \rsqb \ds \label{dd:J:loss}
\end{equation}
in terms of the derivatives  $\dd{\bfu}{}\Dsup$ of the Dirichlet data and $\dd{\bff}$ of the stress vector. The former is given by Lemma~\ref{lm4}. The latter is part of the solution of the derivative problem~\eqref{der:weak}, whose solution therefore seems necessary for evaluating $\lbra J'\PLsub(\OO),\bfth \rbra$ in a given shape perturbation $\bfth$. However, a more effective approach allows to bypass the actual evaluation of $\dd{\bff}$ by combining the forward and derivative problems with appropriate choices of test functions:\enlargethispage*{1ex}

\begin{lemma}\label{lm2}
The following identity holds:
\[
  \iG (\bfuD\shp\bfe_1) \sip \lpar \dd{\bff} + \bff\DivS\bfth \rpar \ds
 = a^1(\bfu,\bfu,\bfth) - 2b^1(\bfu,p,\bfth) + \iG \bff\sip\dd{\bfu}{}\Dsup \ds
\]
\end{lemma}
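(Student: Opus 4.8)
The plan is to produce the left-hand boundary integral directly from~(\ref{der:weak}a) through a single well-chosen test velocity, and then to eliminate every resulting volume term by re-testing the forward and derivative problems. The decisive choice is $\bfv\sheq\bfu\shp c\bfe_1$, which is admissible ($\bfv$ is periodic and lies in $H^1(\OO;\Rbb^2)$, hence $\bfv\shin\VS$) and which reduces to $\bfuD\shp c\bfe_1$ on $\G$ because $\bfu\sheq\bfuD$ there. Inserting this $\bfv$ into~(\ref{der:weak}a) and moving $\lbra\bff,\bfv\DivS\bfth\rbra_{\G}$ to the side carrying $\lbra\dd{\bff},\bfv\rbra_{\G}$, the two boundary terms assemble into exactly $\iG(\bfuD\shp c\bfe_1)\sip(\dd{\bff}\shp\bff\DivS\bfth)\ds$, i.e.\ the left-hand side of the lemma, while the opposite side reads $a(\dd{\bfu},\bfv)-b(\bfv,\dd{p})+a^1(\bfu,\bfv,\bfth)-b^1(\bfv,p,\bfth)$ evaluated at $\bfv\sheq\bfu\shp c\bfe_1$.

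Next I would exploit that $c\bfe_1$ is a rigid field: $\bfD[c\bfe_1]\sheq\bfze$, $\bfna(c\bfe_1)\sheq\bfze$ and $\Div(c\bfe_1)\sheq0$. Inspecting the definitions~\eqref{a1:def},~\eqref{b1:def} of $a^1,b^1$ together with those of $a,b$, every contribution carrying the factor $c\bfe_1$ drops out, so that side collapses to $a(\dd{\bfu},\bfu)-b(\bfu,\dd{p})+a^1(\bfu,\bfu,\bfth)-b^1(\bfu,p,\bfth)$. Two of these are disposed of at once: $b(\bfu,\dd{p})\sheq\iO\dd{p}\,\Div\bfu\dV\sheq0$ by incompressibility of the forward solution, and, by symmetry of $a$, $a(\dd{\bfu},\bfu)\sheq a(\bfu,\dd{\bfu})$, which I rewrite by testing the \emph{forward} equation~(\ref{forward:weak}a) with $\bfv\sheq\dd{\bfu}\shin\VS$ to obtain $a(\bfu,\dd{\bfu})\sheq b(\dd{\bfu},p)\shp\lbra\bff,\dd{\bfu}\rbra_{\G}$.

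The remaining term $b(\dd{\bfu},p)$ is furnished by the derivative continuity equation~(\ref{der:weak}b) with $q\sheq p$ (legitimate since the forward pressure has zero mean, $p\shin\Pcal$), giving $b(\dd{\bfu},p)\sheq-b^1(\bfu,p,\bfth)$. Collecting everything, the right-hand side becomes $a^1(\bfu,\bfu,\bfth)-2b^1(\bfu,p,\bfth)+\lbra\bff,\dd{\bfu}\rbra_{\G}$, the two copies of $b^1$ combining into the factor $2$. It then remains only to convert $\lbra\bff,\dd{\bfu}\rbra_{\G}$ into $\lbra\bff,\dd{\bfu}{}\Dsup\rbra_{\G}$: testing~(\ref{der:weak}c) with $\bfg\sheq\bff\shin\FS$ and using $\bfu\sheq\bfuD$ on $\G$ makes its right-hand side $\lbra(\bfu\shm\bfuD)\DivS\bfth,\bff\rbra_{\G}$ vanish, whence $\lbra\bff,\dd{\bfu}\rbra_{\G}\sheq\lbra\bff,\dd{\bfu}{}\Dsup\rbra_{\G}$ and the claimed identity follows.

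I expect the main difficulty to be organizational rather than analytical: the argument is a disciplined succession of test-function choices, each reproducing one piece of the target while leaving a tractable remainder. The one genuinely substantive step is the opening choice $\bfv\sheq\bfu\shp c\bfe_1$, since only by augmenting $\bfu$ with the rigid field does the boundary pairing in~(\ref{der:weak}a) assemble $\dd{\bff}\shp\bff\DivS\bfth$ tested against $\bfuD\shp c\bfe_1$; it is precisely the simultaneous vanishing of all $c\bfe_1$-contributions to $a,b,a^1,b^1$ that makes this choice pay off. The rest is care in verifying admissibility at each stage ($\bfu\shp c\bfe_1,\dd{\bfu}\shin\VS$, $p\shin\Pcal$, $\bff\shin\FS$) and in tracking signs through the rearrangements.
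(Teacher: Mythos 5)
Your proof is correct and takes essentially the same route as the paper's: the paper forms, in a single stroke, the combination of the forward problem tested with $(\bfv,\bfg,q)=(\dd{\bfu},\dd{\bff},\dd{p})$ and the derivative problem tested with $(\bfv,\bfg,q)=(\bfu\shp\bfe_1,\bff,p)$, which is exactly your sequence of test-function choices, with your direct appeals to incompressibility and to $\bfu\sheq\bfuD$ on $\G$ playing the role of (\ref{forward:weak}b,c). The only discrepancy is cosmetic: you carry $c\bfe_1$ where the lemma (and the paper's proof) uses $\bfe_1$; since your argument never uses the value of that constant, only that it is a rigid field, the identical steps with $\bfe_1$ yield the stated identity verbatim.
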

\begin{proof}
Consider the forward problem~\eqref{forward:weak} with $(\bfv,\bfg,q)=(\dd{\bfu},\dd{\bff},\dd{p})$ and the derivative problem~\eqref{der:weak} with $(\bfv,\bfg,q)=(\bfu\shp\bfe_1,\bff,p)$. Making these substitutions and forming the combination $(\ref{der:weak}a)-(\ref{der:weak}b)-(\ref{der:weak}c)-(\ref{forward:weak}a)+(\ref{forward:weak}b)+(\ref{forward:weak}c)$, one obtains
\[
  \iG (\bfuD\shp\bfe_1)\sip\dd{\bff} \ds - \iG \dd{\bfu}{}\Dsup\sip\bff \ds
 = a^1(\bfu,\bfu,\bfth) - 2b^1(\bfu,p,\bfth)
 - \iG \bff\sip (2\bfu\shp\bfe_1\shm\bfuD)\DivS\bfth \ds
\]
The Lemma follows from using $\bfu\sheq\bfuD$ on $\G$ and rearranging the above equality.
\end{proof}

Then, using Lemma~\ref{lm2} in~\eqref{dd:J:loss} and evaluating $a^1(\bfu,\bfu,\bfth) - 2b^1(\bfu,p,\bfth)$ by means of Lemma~\ref{lm5} with $(\bfuh,\bffh,\hatp)=(\bfu\shp\bfe_1,\bff,p)$ (implying that $\Delta p\sheq0$), the derivative of $J\PLsub$ is recast in the following form, which no longer involves the solution of the derivative problem:
\begin{equation}
  \lbra J'\PLsub(\OO),\bfth \rbra
 = \iG \Lcb (2\mu\bfD[\bfu]\dip\bfD[\bfu])\theta_n + 2\bff\sip(\dd{\bfu}{}\Dsup-\bfna\bfu\sip\bfth) \Rcb \ds
\label{dd:J:loss:trial}
\end{equation}
This expression is still somewhat inconvenient as it involves (through $\bfD[\bfu]$) the complete velocity gradient on $\G$. This can be alleviated by using the decomposition
\begin{equation}
  \bfna\bfu = \nS\bfu + \partial_n\bfu\tens\bfn \label{DS+Dn}
\end{equation}
of the velocity gradient (where $\nS\bfu$ and $\partial_n\bfu$ respectively denote the tangential gradient and the normal derivative of $\bfu$) and expressing $\partial_n\bfu$ in terms of $\bff$ by means of the constitutive relation~(\ref{forward:PDE}b). In view of the specific form~\eqref{noslip:2D} of the Dirichlet data, the latter step is here conveniently carried out explicitly (in Sec.~\ref{proof:lemmas}), using curvilinear coordinates, and yields:
\begin{lemma}\label{forward:wall}
Let $(\bfu,\bff,p)$ solve the forward problem~\eqref{forward:weak}. On the channel wall $\G$, we have
\begin{equation}
  \text{(a) \ }\bfna\bfu
 = \frac{\kappa c\ell}{L} \bfn\tens\bftau
 + \Lpar \frac{f_s}{\mu}-\frac{\kappa c\ell}{L} \Rpar \bftau\tens\bfn, \quad
  \text{(b) \ }2\bfD[\bfu] = \frac{f_s}{\mu} \lpar \bfn\tens\bftau \shp \bftau\tens\bfn \rpar, \quad
  \text{(c) \ }\bff = -p\bfn + f_s\bftau
\end{equation}
where $f_s\shdeq\bff\sip\bftau$ (hence the viscous part of $\bff$ is tangential to $\G$).
\end{lemma}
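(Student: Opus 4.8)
The plan is to work pointwise on the wall $\G$ in the Frenet frame $(\bftau,\bfn)$ and to reconstruct the velocity gradient from the two pieces of the decomposition~\eqref{DS+Dn}, namely $\bfna\bfu = \nS\bfu + \partial_n\bfu\tens\bfn$: the tangential gradient $\nS\bfu$ is completely fixed by the prescribed wall data, while the normal derivative $\partial_n\bfu$ is pinned down by incompressibility and the constitutive law. Parts~(a)--(c) then follow by assembling $\bfD[\bfu]$ and $\bff$ and reading off coefficients.

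First I would compute $\nS\bfu$, which depends only on the trace of $\bfu$ on $\G$. The Dirichlet data~\eqref{noslip:2D} reads $\bfu = (c\ell^{\pm}/L)\bftau$ with the factor $c\ell^{\pm}/L$ constant along each wall component; differentiating along arclength and using the Frenet relation $\bftau_{,s}=\kappa\bfn$ from~\eqref{frenet} gives $\partial_s\bfu = (\kappa c\ell/L)\bfn$, whence $\nS\bfu = (\partial_s\bfu)\tens\bftau = (\kappa c\ell/L)\,\bfn\tens\bftau$.

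Next I would write the unknown normal derivative as $\partial_n\bfu = a\bftau + b\bfn$ with scalar coefficients, so that $\bfna\bfu = (\kappa c\ell/L)\,\bfn\tens\bftau + a\,\bftau\tens\bfn + b\,\bfn\tens\bfn$. The coefficient $b$ is fixed by incompressibility: since the trace is basis-independent and $\bfn\sip\bftau=0$, taking the trace of $\bfna\bfu$ yields $\Div\bfu = b$, and $\Div\bfu=0$ (valid up to the boundary) forces $b=0$. Symmetrizing then gives $2\bfD[\bfu] = (\kappa c\ell/L + a)(\bfn\tens\bftau + \bftau\tens\bfn)$. To determine $a$ I would apply the constitutive relation~(\ref{forward:PDE}b), $\bff = \bfsig[\bfu,p]\sip\bfn = -p\bfn + 2\mu\bfD[\bfu]\sip\bfn$; using $(\bfn\tens\bftau)\sip\bfn=\bfze$ and $(\bftau\tens\bfn)\sip\bfn=\bftau$ then gives $\bff = -p\bfn + \mu(\kappa c\ell/L + a)\bftau$, which is precisely~(c) once $f_s := \bff\sip\bftau$ is identified with $\mu(\kappa c\ell/L + a)$ and the normal component of $\bff$ is read off as $-p$. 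Solving $a = f_s/\mu - \kappa c\ell/L$ and back-substituting into the expressions for $\bfna\bfu$ and $2\bfD[\bfu]$ produces~(a) and~(b).

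The calculation is essentially bookkeeping rather than analysis, so the main thing to get right is the conventions: the opposite orientations of $(\bftau,\bfn)$ on $\G^{+}$ and $\G^{-}$ together with the sign convention $\kappa\shdeq\bftau_{,s}\sip\bfn$ adopted after~\eqref{frenet}, the constancy of $\ell^{\pm}$ on each component (which is exactly what makes $\partial_s(c\ell/L)=0$), and the index convention behind~\eqref{DS+Dn} ensuring $\bfna\bfu\sip\bfn=\partial_n\bfu$ so that the transpose enters $2\bfD[\bfu]$ correctly. Conceptually the key point, and what makes the result so clean, is that the prescribed tangential motion completely determines the tangential behaviour of $\bfu$ on $\G$, leaving only the single scalar unknown $a$ in the normal derivative, which incompressibility and the constitutive law then fix.
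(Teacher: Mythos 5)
Your proof is correct and takes essentially the same route as the paper's appendix proof: there, the gradient of a generic field is written in curvilinear coordinates $(s,z)$, incompressibility is used to eliminate the $\bfn\tens\bfn$ component, and the constitutive relation identifies the remaining coefficient with $f_s$ --- exactly your $b=0$ and $a = f_s/\mu - \kappa c\ell/L$ steps. The only cosmetic difference is that you start from the decomposition \eqref{DS+Dn} and the Frenet formulas applied directly to the boundary data, whereas the paper first derives general curvilinear-coordinate expressions for $\bfna\bfv$ and $\Div\bfv$ and then specializes them.
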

Finally, we evaluate the density of the integral in~\eqref{dd:J:loss:trial} using the formulas of Lemmas~\ref{lm4} and~\ref{forward:wall}. On performing traightforward algebra and rearranging terms, we obtain the following final result for $\lbra J'\PLsub(\OO),\bfth \rbra$, which is suitable for a direct implementation using the output of a boundary integral solver:

\begin{prop}\label{dJ:final}
The shape derivative of the power loss cost functional $J\PLsub$ in a shape perturbation whose transformation velocity field $\bfth$ satisfies asumptions~\eqref{Theta:def} is given (with $f_s\shdeq\bff\sip\bftau$) by
\begin{align}
  \lbra J'\PLsub(\OO),\bfth \rbra
 &= \sum_{\eps=+,-} \int_{\G^{\eps}} \Lcb \Lsqb 2\frac{c\ell^{\eps}\kappa}{L}f_s -  \inv{\mu} f_s^2
   \Rsqb\theta_n
 + \frac{2c}{L} \lpar \dd{\ell}{}^{\eps} f_s - \ell^{\eps}(\partial_s\theta_n)p \rpar \Rcb \ds.
\label{dd:J:loss:result2}
\end{align}
where $\eps\shin\{+,-\}$ refers to the upper wall $\G^+$ or the lower wall $\G^-$.
\end{prop}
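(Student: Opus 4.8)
The plan is to start from the reduced form~\eqref{dd:J:loss:trial}, in which the integrand over $\G$ is $(2\mu\bfD[\bfu]\dip\bfD[\bfu])\theta_n + 2\bff\sip(\dd{\bfu}{}\Dsup-\bfna\bfu\sip\bfth)$, and to collapse this integrand to a purely boundary expression by inserting the wall relations of Lemma~\ref{forward:wall} together with Lemma~\ref{lm4}. Since those relations hold separately on $\G^+$ and $\G^-$, with the component-dependent length $\ell^{\eps}$ entering the Dirichlet data, I would carry out the reduction on a generic wall component using the scalar notations $f_s$, $\ell$, $\dd{\ell}$ and reinstate the superscript $\eps$ and the sum $\sum_{\eps=+,-}$ only at the very end.

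First I would dispatch the dissipation term. Inserting Lemma~\ref{forward:wall}(b), $2\bfD[\bfu]=(f_s/\mu)(\bfn\tens\bftau+\bftau\tens\bfn)$, and using the orthonormality of $(\bftau,\bfn)$ to evaluate $(\bfn\tens\bftau+\bftau\tens\bfn)\dip(\bfn\tens\bftau+\bftau\tens\bfn)=2$, I obtain $2\mu\bfD[\bfu]\dip\bfD[\bfu]=f_s^2/\mu$, so this term contributes $(f_s^2/\mu)\theta_n$ to the integrand.

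Next I would treat the coupling term $2\bff\sip(\dd{\bfu}{}\Dsup-\bfna\bfu\sip\bfth)$ by computing the two scalar products separately, writing $\bff=-p\bfn+f_s\bftau$ from Lemma~\ref{forward:wall}(c) and $\bfth=\theta_s\bftau+\theta_n\bfn$ from~\eqref{theta:proj}. For $\bff\sip\dd{\bfu}{}\Dsup$ I would insert Lemma~\ref{lm4}(a); the cross terms drop by orthonormality, leaving $-p(c\ell/L)(\partial_s\theta_n+\kappa\theta_s)+f_s(c\dd{\ell}/L)$. For the other piece I would first form $\bfna\bfu\sip\bfth$ from Lemma~\ref{forward:wall}(a), interpreting the single contraction through $(\bfa\tens\bfb)\sip\bfth=\bfa\,(\bfb\sip\bfth)$, and then contract with $\bff$ to obtain $-p(\kappa c\ell/L)\theta_s+(f_s^2/\mu)\theta_n-(\kappa c\ell/L)f_s\theta_n$.

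The decisive step is the subtraction of these two expressions: the two contributions carrying $p\kappa\theta_s$ cancel exactly, so that the tangential component $\theta_s$ survives only inside $\dd{\ell}$, which by Lemma~\ref{lm4}(b) is itself a functional of $\theta_n$ alone. Adding the dissipation term to twice the resulting coupling term then yields the integrand $\lsqb 2(c\ell\kappa/L)f_s-f_s^2/\mu\rsqb\theta_n+(2c/L)(\dd{\ell}f_s-\ell(\partial_s\theta_n)p)$, and restoring the labels $\ell^{\eps},\dd{\ell}{}^{\eps}$ and summing over $\eps=+,-$ produces~\eqref{dd:J:loss:result2}. I expect this cancellation to be the point to watch: it is precisely what makes the final formula consistent with the structure theorem~\eqref{shape:structure}, i.e.\ dependent on the normal velocity only, and keeping the contraction conventions for $\dip$ and for $\bfna\bfu\sip\bfth$ consistent throughout is what makes it come out cleanly. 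The rest is routine algebra.
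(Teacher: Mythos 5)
Your proposal is correct and takes essentially the same route as the paper: the paper's proof of Proposition~\ref{dJ:final} consists precisely of evaluating the integrand of~\eqref{dd:J:loss:trial} wall-by-wall using Lemmas~\ref{lm4} and~\ref{forward:wall} followed by ``straightforward algebra''. Your explicit computation (including the cancellation of the $p\kappa\theta_s$ terms, which the paper leaves implicit) is accurate and reproduces~\eqref{dd:J:loss:result2} exactly.
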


\begin{remark}
The shape derivative $\lbra J'\PLsub(\OO),\bfth \rbra$ as given by Proposition~\ref{dJ:final} is a linear functional on $\theta_n|_\G$ (since $\dd{\ell}$ is one, see Lemma~\ref{lm4}), as predicted by the structure theorem. Moreover, it is insensitive to the pressure being possibly defined up to an additive constant: indeed, replacing $p$ with $p\shp\Delta p$ adds
\begin{equation}
  - \frac{2c\ell^+}{L} \int_{\G^{+}} \partial_s\theta_n \ds - \frac{2c\ell^-}{L} \int_{\G^{-}} \partial_s\theta_n \ds
\end{equation}
to $\lbra J'\PLsub(\OO),\bfth \rbra$ as given by Proposition~\ref{dJ:final}, and this quantity vanishes due to the assumed periodicity of $\theta_n$.
\end{remark}

\subsection{Shape derivative of mass flow rate functional}
\label{dd:CQ}

The derivative of the mass flow rate functional $C\Qsub$, defined by~\eqref{flowrate}, is given (recalling~\eqref{dd:volume} for the shape derivative of $|\OO|$) by
\begin{equation}
  \lbra C\Qsub'(\OO), \bfth \rbra
 = \lbra I'(\OO), \bfth \rbra + \frac{c}{L}\iG \theta_n \ds, \qquad 
  \text{with} \ I(\OO) := \inv{L}\iO \bfu_{\OO}\sip\bfe_1 \dV. \label{dd:flowrate}
\end{equation}
To evaluate $\lbra I'(\OO), \bfth \rbra$, with $\bfu$ solving the forward problem~\eqref{forward:PDE}, we recast $I(\OO)$ as an integral over the end section $\G_0$ by means of the divergence theorem, recalling that $\bfu_{\OO}$ is divergence-free, tangential on $\G$ and periodic:
\[
  I(\OO)
 = \inv{L}\iO \lsqb \Div(x_1\bfu_{\OO}) - x_1\Div\bfu_{\OO} \rsqb \dV
 = \inv{L}\Lcb \int_{\G_L} x_1(\bfu_{\OO}\sip\bfe_1) \ds - \int_{\G_0} x_1(\bfu_{\OO}\sip\bfe_1) \ds \Rcb
 = \int_{\G_L} u_{\OO}\sip\bfe_1 \ds.
\]
Consequently, using identity~(\ref{dd:IVS}b) with provision (ii) of~\eqref{Theta:def}, we find:
\begin{equation}
  \lbra I'(\OO), \bfth \rbra
 = \int_{\G_L} \lpar \dd{\bfu} \shp \bfu \DivS\bfth \rpar \sip \bfe_1 \ds
 = \int_{\G_L} \lpar \dd{u}_1 \shp u_1 \partial_2\theta_2 \rpar \dx_2 \label{dd:I0}
\end{equation}
The forward solution derivative $\dd{\bfu}$ in the above expression will now be eliminated with the help of an adjoint problem. Let the adjoint state $(\bfuh,\bffh,\hatp)$ be defined as the solution of the mixed weak formulation
\begin{equation}
\begin{aligned}
\text{(a) \ }&&
  a(\bfuh,\bfv) - b(\bfv,\hatp) - \lbra \bffh ,\bfv \rbra_{\G} &= -\lbra 1 , \bfv\sip\bfe_1 \rbra_{\Gpp}
 && \forall\bfv\shin\VS, \\
\text{(b) \ }&&  b(\bfuh,q) &= 0 && \forall q\shin\Pcal, \\
\text{(c) \ }&& - \lbra\bfuh,\bfg\rbra_{\G} &= 0 && \forall\bfg\shin\FS.
\end{aligned}\label{adjoint:weak}
\end{equation}
The adjoint state $(\bfuh,\bffh,\hatp)$ thus results from applying a unit pressure difference $\Delta\hatp=1$ between the channel end sections while prescribing a no-slip condition on the channel walls, i.e. Problem~(2.14--2.18) of~\cite{barnett:16} with homogeneous Dirichlet data on the walls (see Remark~\ref{pressure:diff}).\enlargethispage*{1ex}

Then, combining the adjoint problem~\eqref{adjoint:weak} with $(\bfv,\bfg,q)=(\dd{\bfu},\dd{\bff},\dd{p})$ and the derivative problem~\eqref{der:weak} with $(\bfv,\bfg,q)=(\bfuh,\bffh,\hatp)$ and recalling the no-slip condition $\bfuh=\bfze$ on $\G$ yields the identity
\begin{equation}
  \int_{\G_L} \dd{u}_1 \dx_2
   = a^1(\bfu,\bfuh,\bfth) - b^1(\bfuh,p,\bfth) - b^1(\bfu,\hatp,\bfth)
   + \iG \dd{\bfu}{}\Dsup\sip\bffh \ds. \label{dd:I}
\end{equation}
We next use this identity in~\eqref{dd:I0} and substitute the resulting expression of $\lbra I'(\OO), \bfth \rbra$ into~\eqref{dd:flowrate}, to obtain
\begin{equation}
  \lbra C\Qsub'(\OO), \bfth \rbra
 = a^1(\bfu,\bfuh,\bfth) - b^1(\bfuh,p,\bfth) - b^1(\bfu,\hatp,\bfth)
 + \iG \Lpar \frac{c}{L}\theta_n + \bffh\sip\dd{\bfu}{}\Dsup \Rpar \ds
 + \int_{\G_L} u_1 \partial_2\theta_2 \dx_2 \label{aux5}
\end{equation}
We first apply Lemma~\ref{lm5} with $\Delta\hatp=1$ to the above expression, to obtain
\begin{equation}
  \lbra C\Qsub'(\OO), \bfth \rbra
 =  \iG \Lpar -(2\mu\bfD[\bfu]\dip\bfD[\bfuh])\theta_n + \frac{c}{L}\theta_n
 - \bff\sip\bfna\bfuh\sip\bfth
 - \bffh\sip(\bfna\bfu\sip\bfth-\dd{\bfu}{}\Dsup) \Rpar \ds + \int_{\G_L} \partial_2(u_1\theta_2) \dx_2, \label{dd:I:exp}
\end{equation}
whose form is similar to~\eqref{dd:J:loss:trial}. The first integral in~\eqref{dd:I:exp} can be reformulated with the help of Lemma~\ref{lm4}, Lemma~\ref{forward:wall} and its following counterpart for the adjoint solution (whose proof is given in Sec.~\ref{proof:lemmas}):
\begin{lemma}\label{adjoint:wall}
Let $(\bfuh,\bffh,\hatp)$ solve the adjoint problem~\eqref{adjoint:weak}. On the channel wall $\G$, we have
\begin{equation}
  \text{(a) \ }\bfna\bfuh = \frac{\fhat_s}{\mu}\bftau\tens\bfn, \qquad
  \text{(b) \ }2\bfD[\bfuh] = \frac{\fhat_s}{\mu} \lpar \bfn\tens\bftau \shp \bftau\tens\bfn \rpar, \qquad
  \text{(c) \ }\bffh = -\hatp\bfn + \fhat_s\bftau
\end{equation}
where $\fhat_s\shdeq\bffh\sip\bftau$ (hence the viscous part of $\bffh$ is tangential to $\G$).
\end{lemma}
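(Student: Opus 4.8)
The plan is to follow the same route as the proof of its forward counterpart, Lemma~\ref{forward:wall}, while exploiting the fact that the adjoint velocity carries \emph{homogeneous} Dirichlet data on the wall, which makes the argument strictly simpler. The starting point is the no-slip condition $\bfuh\sheq\bfze$ on $\G$, which is the strong form of the variational constraint~(\ref{adjoint:weak}c). Everything then reduces to rewriting the wall values of $\bfna\bfuh$ in the local $(\bftau,\bfn)$ frame and identifying the single scalar coefficient that survives.

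First I would decompose the velocity gradient on $\G$ using~\eqref{DS+Dn}, namely $\bfna\bfuh\sheq\nS\bfuh\shp\partial_n\bfuh\tens\bfn$. Because $\bfuh$ vanishes identically along $\G$, its tangential derivative $\partial_s\bfuh$ vanishes there as well, so the tangential gradient $\nS\bfuh\sheq\partial_s\bfuh\tens\bftau$ drops out and $\bfna\bfuh\sheq\partial_n\bfuh\tens\bfn$ on $\G$. Next I would expand the normal derivative in the local frame as $\partial_n\bfuh\sheq a\bftau\shp b\bfn$ and invoke incompressibility: since $\Div\bfuh\sheq\text{tr}(\bfna\bfuh)\sheq b$ must vanish, the normal component $b$ is zero, leaving $\bfna\bfuh\sheq a\,\bftau\tens\bfn$ on $\G$. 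Symmetrizing gives $2\bfD[\bfuh]\sheq a(\bftau\tens\bfn\shp\bfn\tens\bftau)$, and inserting this into the constitutive relation~(\ref{forward:PDE}b) and contracting with $\bfn$ yields $\bffh\sheq\bfsig[\bfuh,\hatp]\sip\bfn\sheq-\hatp\bfn\shp\mu a\bftau$. Taking the tangential component identifies $\fhat_s\sheq\bffh\sip\bftau\sheq\mu a$, i.e. $a\sheq\fhat_s/\mu$; substituting back produces assertions (a), (b) and (c) simultaneously.

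I do not expect a genuine obstacle here. Unlike the forward case, where the nonzero tangential wall velocity $\bfuD\sheq(c\ell/L)\bftau$ varies along the curved wall and feeds the curvature contributions $\kappa c\ell/L$ that appear in Lemma~\ref{forward:wall}, the homogeneous adjoint data annihilate exactly those terms, so no Frenet bookkeeping is required. The only points needing minor care are the pointwise validity of the boundary relations (which presumes enough regularity of the adjoint solution for the traces of its first derivatives to be well defined) and the sign and ordering conventions in the tensor-product algebra; both are handled precisely as in the proof of Lemma~\ref{forward:wall}.
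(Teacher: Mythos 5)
Your proposal is correct and follows essentially the same route as the paper's proof: work in the local $(\bftau,\bfn)$ frame on $\G$, use the no-slip condition to annihilate all tangential derivatives, invoke incompressibility (trace of the gradient) to kill the normal--normal entry, and identify the surviving coefficient as $\fhat_s/\mu$ through the constitutive relation and the traction $\bffh=\bfsig[\bfuh,\hatp]\sip\bfn$. The only difference is presentational: the paper sets up curvilinear coordinates $(s,z)$ once and proves Lemmas~\ref{forward:wall} and~\ref{adjoint:wall} simultaneously for a generic field (so curvature terms appear and then vanish when the data are homogeneous), whereas you start from the decomposition~\eqref{DS+Dn} and exploit $\bfuh\sheq\bfze$ immediately, which is a streamlining of the same argument rather than a different one.
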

Carrying out these derivations, and evaluating the second integral of~\eqref{dd:I:exp}, establishes the following final, implementation-ready, formula for $\lbra C\Qsub'(\OO), \bfth \rbra$:
\begin{prop}\label{dd:massflow}
The shape derivative of the mass flow rate functional $C\Qsub$ in a domain shape perturbation such that the transformation velocity field $\bfth$ satisfies asumptions~\eqref{Theta:def} is given by
\begin{align}
  \lbra C'\Qsub(\OO),\bfth \rbra
 &=  \sum_{\eps=+,-} \int_{\G^{\eps}} \Lcb \Lpar \frac{c\ell^{\eps}\kappa}{L} -  \inv{\mu} f_s\fhat_s + \frac{c}{L} \Rpar \theta_n
  + \frac{c}{L} \lpar \dd{\ell}{}^{\eps} \fhat_s -  \ell^{\eps}(\partial_s\theta_n)\phat \rpar \Rcb \ds \suite\qquad
  + [\theta_2 u_1](\bfz^+) - [\theta_2 u_1](\bfz^-)
\end{align}
where $f_s$ and $(\hatp,\fhat_s)$ are components of the solutions of the forward problem~\eqref{forward:weak} and the adjoint problem~\eqref{adjoint:weak}, respectively, and $\eps\shin\{+,-\}$ refers to the upper wall $\G^+$ or the lower wall $\G^-$.
\end{prop}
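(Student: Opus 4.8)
The plan is to start from the reduced expression~\eqref{dd:I:exp}, in which Lemma~\ref{lm5} (applied with $\Delta\hatp=1$) has already turned $\lbra C\Qsub'(\OO),\bfth\rbra$ into a single integral over the wall $\G$ plus the end-section integral $\int_{\G_L}\partial_2(u_1\theta_2)\dx_2$, and to evaluate the wall integrand pointwise. The only inputs needed are the curvilinear restrictions of the fields: $\dd{\bfu}{}\Dsup$ and $\dd{\ell}$ from Lemma~\ref{lm4}, the forward wall quantities $\bfna\bfu$, $\bfD[\bfu]$, $\bff$ from Lemma~\ref{forward:wall}, and their adjoint counterparts $\bfna\bfuh$, $\bfD[\bfuh]$, $\bffh$ from Lemma~\ref{adjoint:wall}. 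After substitution, every term becomes an elementary contraction in the orthonormal Frenet frame $(\bftau,\bfn)$, so no further PDE analysis is required.

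First I would expand $\bfth=\theta_s\bftau+\theta_n\bfn$ and reduce the four wall contributions of~\eqref{dd:I:exp} one by one. Using $2\bfD[\bfu]=(f_s/\mu)(\bfn\tens\bftau+\bftau\tens\bfn)$ and the analogous adjoint identity, together with $(\bfn\tens\bftau)\dip(\bfn\tens\bftau)=(\bftau\tens\bfn)\dip(\bftau\tens\bfn)=1$ and $(\bfn\tens\bftau)\dip(\bftau\tens\bfn)=0$, the quadratic term collapses to $2\mu\bfD[\bfu]\dip\bfD[\bfuh]\,\theta_n=(f_s\fhat_s/\mu)\theta_n$. The mixed traction $\bff\sip\bfna\bfuh\sip\bfth$ is the simplest, since $\bfna\bfuh$ has only a $\bftau\tens\bfn$ component (the adjoint satisfies no-slip, so its tangential gradient vanishes): it reduces to $(\bff\sip\bftau)(\bfn\sip\bfth)\,\fhat_s/\mu=(f_s\fhat_s/\mu)\theta_n$.

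Next I would assemble the combination $\bfna\bfu\sip\bfth-\dd{\bfu}{}\Dsup$ appearing in the remaining term $-\bffh\sip(\bfna\bfu\sip\bfth-\dd{\bfu}{}\Dsup)$. Here the curvature contributions are crucial: the $\bfn\tens\bftau$ piece of $\bfna\bfu$ contributes $(\kappa c\ell/L)\theta_s\bfn$, which cancels the matching term inside $\dd{\bfu}{}\Dsup$ (Lemma~\ref{lm4}), so the normal part telescopes to $-(c\ell/L)(\partial_s\theta_n)\bfn$, while the tangential part is $((f_s/\mu)-\kappa c\ell/L)\theta_n-c\dd{\ell}/L$ times $\bftau$. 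Contracting against $\bffh=-\hatp\bfn+\fhat_s\bftau$, the $\theta_s$-proportional pieces carried by $\hatp$ cancel, leaving $\theta_n$-, $\partial_s\theta_n$- and $\dd{\ell}$-proportional terms only. Grouping these with the three already-evaluated contributions and the explicit $(c/L)\theta_n$ produces the wall density claimed in the statement. Finally, because $\G_L$ is the straight vertical segment $\{x_1=L\}$ and $\partial_2(u_1\theta_2)$ is a perfect $x_2$-derivative, the fundamental theorem of calculus evaluates $\int_{\G_L}\partial_2(u_1\theta_2)\dx_2$ as $[\theta_2 u_1](\bfz^+)-[\theta_2 u_1](\bfz^-)$ at the endpoints of $\G_L$; summing the wall density over the two components $\G^{\pm}$ then gives the result.

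The hard part is not analytic but bookkeeping: the tensor contractions must be carried out consistently in curvilinear coordinates while honoring the opposite orientations of $(\bftau,\bfn)$---and the attendant sign convention on $\kappa$ fixed in~\eqref{frenet}---on $\G^+$ and $\G^-$, and the several $\theta_s$ contributions must be tracked carefully to confirm that they indeed cancel. This cancellation is exactly what the structure theorem~\eqref{shape:structure} predicts: the surviving density is linear in $\theta_n$, through $\theta_n$, $\partial_s\theta_n$, and $\dd{\ell}$ (the last being itself linear in $\theta_n$ by Lemma~\ref{lm4}), with no dependence on the tangential component $\theta_s$.
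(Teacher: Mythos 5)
Your route is exactly the paper's: the paper establishes \eqref{dd:I:exp} (adjoint problem plus Lemma~\ref{lm5}) and then closes the proof by substituting Lemmas~\ref{lm4}, \ref{forward:wall} and \ref{adjoint:wall} into the wall integral and applying the fundamental theorem of calculus on $\G_L$, which is precisely your plan. The individual contractions you report are also correct: $2\mu\bfD[\bfu]\dip\bfD[\bfuh]=f_s\fhat_s/\mu$, $\bff\sip\bfna\bfuh\sip\bfth=(f_s\fhat_s/\mu)\theta_n$, the telescoping $\bfna\bfu\sip\bfth-\dd{\bfu}{}\Dsup=\lsqb(f_s/\mu-\kappa c\ell/L)\theta_n-c\dd{\ell}/L\rsqb\bftau-(c\ell/L)(\partial_s\theta_n)\bfn$, and the endpoint evaluation of $\int_{\G_L}\partial_2(u_1\theta_2)\dx_2$.

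The gap is that you never carry out the grouping you assert ``produces the wall density claimed in the statement,'' and in fact it does not -- this is exactly where the bookkeeping you yourself flag as the hard part bites. Taking \eqref{dd:I:exp} literally as printed, the three $f_s\fhat_s$ contributions (from the quadratic term, the mixed term, and the $\bffh\sip\bftau$ part of the last term) all enter with the same sign, giving a coefficient $-3f_s\fhat_s/\mu$ of $\theta_n$, not $-f_s\fhat_s/\mu$. The resolution is that the sign of the quadratic term in \eqref{dd:I:exp} is a typo: substituting Lemma~\ref{lm5} into \eqref{aux5} yields $+(2\mu\bfD[\bfu]\dip\bfD[\bfuh])\theta_n$, since $\bfsig[\bfu,p]\dip\bfD[\bfuh]=2\mu\bfD[\bfu]\dip\bfD[\bfuh]$ by incompressibility. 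With that sign corrected, your grouping gives
\begin{equation}
  \lbra C'\Qsub(\OO),\bfth \rbra
 = \sum_{\eps=+,-} \int_{\G^{\eps}} \Lcb \Lpar \frac{c\ell^{\eps}\kappa}{L}\fhat_s - \inv{\mu} f_s\fhat_s + \frac{c}{L} \Rpar \theta_n
  + \frac{c}{L} \lpar \dd{\ell}{}^{\eps} \fhat_s - \ell^{\eps}(\partial_s\theta_n)\phat \rpar \Rcb \ds
  + [\theta_2 u_1](\bfz^+) - [\theta_2 u_1](\bfz^-),
\end{equation}
whose curvature term carries the factor $\fhat_s$, absent from the printed Proposition~\ref{dd:massflow}. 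That factor must be there: every term produced by the adjoint elimination is linear in the adjoint state $(\bfuh,\bffh,\hatp)$, and the analogous term in Proposition~\ref{dJ:final} is $2(c\ell^{\eps}\kappa/L)f_s$, with the traction factor present. So a faithful execution of your plan would have uncovered both discrepancies (one typo in \eqref{dd:I:exp}, one in the statement itself); asserting exact agreement with the printed formula, without doing the final assembly, is the step at which your proof is incomplete.
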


\begin{remark}\label{pressure:diff}
Let the Stokes solution $(\bfuh,\hatp)$, periodic up to a constant (unit) pressure drop, be defined by
\begin{equation}
  -\Div\lpar 2\mu\bfD[\bfuh]\shm\hatp\bfI\rpar = \bfze \ \ \text{in $\OO$}, \quad \Div\bfuh\sheq0, \quad\bfu\sheq\bfze \ \ \text{on $\G$}, \quad
  \hatp(\cdot)\shm \hatp(\cdot\shm L\bfe_1) = 1 \ \ \text{on $\Gp^+$}
\end{equation}
(i.e. Problem~(2.14--2.18) of~\cite{barnett:16} with homogeneous Dirichlet data on the walls). Taking the dot product of the first equation by $\bfv\shin\VS$, integrating by parts and using the periodicity of $\bfv$ and $2\mu\bfD[\bfu]$, we obtain equation~(\ref{adjoint:weak}a), while equations~(\ref{adjoint:weak}b,c) express the incompressibility and no-slip conditions.
\end{remark}

\section{Numerical algorithm} \label{sc:numerical}
We now formally define the shape optimization problem and describe a numerical algorithm to solve it based on the shape sensitivity formulas derived in the previous section.  

\subsection{Optimization method}

Our goal is to find the shape of the peristaltic pump in the wave frame that minimizes the power loss functional subject to constant volume and flow rate constraints, that is, 
\begin{equation}
 \OO^{\star} =  \argmin_{\OO \, \shin\, \Ocal} \; J\PLsub(\OO) \quad\text{subject to \ } C\Qsub(\OO) = 0, C\Vsub(\OO) = 0,
\label{eq:opt}\end{equation}
with $\Ocal$ defined as in \eqref{Ocal:def}. While there are numerous approaches for solving constrained optimization problems \cite{NoceWrig06}, we use an Augmented Lagrangian (AL) approach and avoid second-order derivatives of the cost functional, whose evaluation is somewhat challenging in the case of our shape optimization problem.  It proceeds by forming an augmented Lagrangian, defined by
\begin{equation}
  \Lcal\Asub(\OO,\lambda;\sigma)
 = J\PLsub(\OO) - \lambda_1 C\Qsub(\OO) - \lambda_2 C\Vsub(\OO) + \frac{\sigma_1}{2}C^2\Qsub(\OO) + \frac{\sigma_2}{2}C^2\Vsub(\OO),
\label{eq:augla}\end{equation}
where $\sigma = (\sigma_1, \sigma_2)$ are penalty coefficients that are positive and  $\lambda = (\lambda_1, \lambda_2)$ are Lagrange multipliers. Setting the initial values $\sigma^0$ and $\lambda^0$ using heuristics, the augmented Lagrangian method introduces a sequence $(m= 1, 2,\dots)$ of unconstrained minimization problems:
\begin{equation}
 \OO_m =  \argmin_{\OO \, \shin\, \Ocal}  \Lcal\Asub(\OO,\lambda^m;\sigma^m) ,
\label{eq:minsubp}\end{equation}
with explicit Lagrange multiplier estimates $\lambda^m$ and increasing penalties $\sigma^m$. We use the Broyden-Fletcher-Goldfarb-Shanno (BFGS) algorithm \cite{NoceWrig06}, a quasi-Newton method, for solving ~\eqref{eq:minsubp}. Equation~\eqref{dd:volume}, Propositions \ref{dJ:final} and \ref{dd:massflow} are used in this context for all gradient evaluations. 

The overall optimization procedure is summarized in Algorithm~\ref{alg:AGM}.

\begin{algorithm}[tb] \label{alg:AGM}
\caption{\small\em Augmented Lagrangian method for problem~\eqref{eq:opt}}
\algsetup{indent=3em}
\begin{algorithmic}[1]
\STATE Choose initial design shape parameters $\bfxi^0 $, which determines initial domain $\OO_0$ \\ Set convergence tolerance $\zeta^{\star} = 10^{-3}$ \\ Set $\lambda^0 = (0,0)$ \\ Set $\sigma^0_1=10$, $\sigma^0_2 = 10$ or $\sigma^0_2 = 100$ (depending on $\OO_0$) 
\\ Set $\zeta^1= (\sigma^0)^{-0.1}$
\FOR{$m = 1, 2, \dots$}
  \STATE \# Solve unconstrained minimization subproblem~\eqref{eq:minsubp} for $\OO_m$\vspace*{1ex}
  \STATE From $\bfxi^{m-1} = \bfxi^{m-1,0}$ and an approximate (positive definite) Hessian matrix $B_0$, repeat the following steps until $\bfxi^{m-1,j}$ converges to the solution $\bfxi^m$:
  \STATE 1) Obtain a perturbation direction $\bfp_{j}$ by solving $B_j \bfp_{j} = - \nabla_{\bfxi} \Lcal\Asub(\bfxi^{m-1,j} , \lambda^{m-1};\sigma^{m-1})$
  \STATE 2) find an acceptable stepsize $\eta_{j}$
  \STATE 3) $\bfxi^{m-1,j+1} = \bfxi^{m-1, j} + \eta_j \bfp_{j}$
  \STATE 4) Update the approximate Hessian matrix $B_{j+1} $(by BFGS formula)
   \STATE  The solution $\bfxi^m$ uniquely determines $\OO_m$.\vspace*{1ex}
   \STATE \# Check for convergence and/or update Lagrange multipliers and penalty parameters\vspace*{1ex}
  \IF{$|C\Vsub(\OO_m)|\shl\zeta^{m}_1$ and $|C\Qsub(\OO_m)|\shl\zeta^{m}_2$ }
    \IF{$|C\Vsub(\OO_m)|\shl\zeta^{\star}$ and $|C\Qsub(\OO_m)|\shl\zeta^{\star}$}
      \STATE Set $\bfxi^{\star}:=\bfxi^m$, i.e., $\OO^{\star}:=\OO_m$ \quad \textbf{STOP}
    \ENDIF
    \STATE{$\lambda^{m}_1=\lambda^{m-1}_1-\sigma^{m-1}_1 C\Vsub(\OO_m)$}
    \STATE{$\lambda^{m}_2=\lambda^{m-1}_2-\sigma^{m-1}_2 C\Qsub(\OO_m)$}
    \STATE{$\sigma^{m}=\sigma^{m-1}$}
    \STATE{$\zeta^{m+1}=(\sigma^0)^{-0.9}\zeta^m$}
  \ELSE
    \STATE{$\lambda^{m}=\lambda^{m-1}$}
    \STATE{$\sigma^{m}=10\sigma^{m-1}$}
    \STATE{$\zeta^{m+1}=(\sigma^0)^{-0.1}$}
  \ENDIF
\ENDFOR
\end{algorithmic}
\end{algorithm}

\subsection{Finite-dimensional parametrization of shapes} In view of both the structure theorem, see~\eqref{shape:structure}, and the fact that we rely for the present study on a boundary integral method, we only need to model shape perturbations of the channel walls. Here we consider, for each wall $\G^{\pm}$, parametrizations of the form $\bfx^{\pm} = \bfx^{\pm}(t, \bfxi)$ with 
%
\begin{align}
 \G\ni x_1^{\pm} (t) &:= \frac{L}{2\pi} t -\sum_{k=1}^{N} \xi_{1,k}^{\pm} + \sum_{k=1}^{2N} \xi^{\pm}_{1,k}  \phi_k (t) \quad t\shin [0, 2\pi],
\label{eq:Param2a}\\
 \G\ni x_2^{\pm} (t) &:=  \xi_{2,0}^{\pm}-\sum_{k=1}^{N} \xi_{2,k}^{\pm} + \sum_{k=1}^{2N} \xi^{\pm}_{2,k}  \phi_k (t) \quad t\shin [0, 2\pi],
\label{eq:Param2b}
\end{align}
where $\phi_k (t)$ are the trigonometric polynomials $\{\cos(t), \cos(2t), \dots, \cos(Nt), \sin(t), \sin(2t), \dots, \sin(Nt)\}$. 
The set of admissible shapes as in \eqref{Ocal:def} indicates $x_1^{\pm}(0) = 0$ and $x_1^{\pm}(2\pi) = L$, which are enforced in \eqref{eq:Param2a}.
The constraint (iii) of~\eqref{Theta:def} is fulfilled by a fixed value $\xi_{2,0}^{-} = x_2^{-}(0)$ in  \eqref{eq:Param2b}, which will be pre-assigned and excluded from the shape parameters.

Therefore, we take as design shape parameters the set $\bfxi = \{\xi^{\pm}_{1,1},\cdots,\xi^{\pm}_{1,2N}, \xi^{+}_{2,0}, \xi^{\pm}_{2,1}, \cdots, \xi^{\pm}_{2, 2N}\} $ of dimension $8N+1$. Since the parametrization~\eqref{eq:Param2a}, \eqref{eq:Param2b} is linear in $\bfxi$, transformation velocities $\bfth$ associated to perturbed parameters $\bfxi(\eta) \sheq \bfxi \shp \eta \bfp$, i.e., to the $\eta$-dependent parametrization have the form
\begin{equation}
    \bfth(\bfx(t)) = \frac{1}{\eta} \left( \bfx(t; \bfxi + \eta \bfp ) -   \bfx(t; \bfxi ) \right) \quad t\shin [0, 2\pi].
\end{equation}

\subsection{Boundary integral solver} \label{sec:forwardsolver}

Solving the unconstrained optimization problem~\eqref{eq:minsubp} requires evaluating the shape sensitivites (Propositions \ref{dJ:final} and \ref{dd:massflow}), which in turn require solving the forward and adjoint problems to obtain the corresponding traction and pressure on the channel walls. In both problems, the fluid velocity and pressure satisfy the Stokes equations:
\begin{equation}
-\nabla p + \mu \Delta \bfu = 0 \quad \text{and} \quad \nabla \cdot \bfu = 0 \quad\text{in} \quad \Omega.
\label{eq:Stokes2} \end{equation}
While the forward problem requires applying prescribed slip on the walls and periodic boundary conditions, the adjoint problem requires a no-slip on the walls and unit pressure drop across the channel. The boundary conditions can be summarized in a slightly modified form as follows.
%
\begin{eqnarray}
\mbox{\textit{Forward problem:} } &\quad&  \bfu = \bfuD \, \text{on $\G$},  \quad \bfu |_{\Gamma_L} - \bfu |_{\Gamma_0} = \mathbf{0} \quad \text{and} \quad T |_{\Gamma_L} - T |_{\Gamma_0} = \mathbf{0}.  \label{eq:forwprb} \\
\mbox{\textit{Adjoint problem:} } &\quad& \bfu = \mathbf{0} \,\, \text{on $\G$},  \quad \bfu |_{\Gamma_L} - \bfu |_{\Gamma_0} = \mathbf{0} \quad \text{and} \quad T |_{\Gamma_L} - T |_{\Gamma_0} = \bfe_1.   \label{eq:adjprb}
\end{eqnarray}
Here, $T$ is the {\em traction} vector, whose components are given by $T_i(\bfu, p) = \sigma_{ij}(\bfu, p) \bfn_j$ and it represents the hydrodynamic force experienced by any interface in the fluid with normal $\bfn$. These systems of equations correspond to \eqref{forward:PDE} and \eqref{adjoint:weak} respectively by unique continuation of Cauchy data \cite{barnett:16}.  The standard boundary integral approach for periodic flows is to use periodic Greens functions obtained by summing over all the periodic copies of the sources \cite{poz}. The disadvantage of this approach is the slow convergence rate, specially, for high-aspect ratio domains; moreover, the pressure-drop condition cannot be applied directly. Instead, we use the periodization scheme developed recently in \cite{barnett:16} that uses the free-space kernels only and enforces the inlet and outlet flow conditions in (\ref{eq:forwprb}, \ref{eq:adjprb}) algebraically at a set of collocation nodes. 

The free-space Stokes single-layer kernel and the associated pressure kernel, given a source point $\y$ and a target point $\x$, are given by
\begin{equation}
S(\x,{\bf y})=\frac{1}{4\pi\mu}\left(-\log{|\x - \y|} \, \mathbf{I} + \frac{(\x - \y)\otimes(\x - \y)}{|\x-\y|^2}\right) \quad\text{and}\quad Q({\bf x},{\bf y})=\frac{1}{2\pi} \frac{\x-\y}{|\x-\y|^2}.
\label{S}
\end{equation}
The approach of \cite{barnett:16} represents the velocity field as a sum of free-space potentials defined on the unit cell $\Omega$, its nearest periodic copies and at a small number, $K$, of auxiliary sources located exterior to $\Omega$ that act as proxies for the infinite number of far-field periodic copies: 
\begin{equation}
\bfu \;=\; {\cal S}_\Gamma^\text{near}\bm\tau + \sum_{m=1}^K {\bf c}_m \phi_m,
\label{urep}
\end{equation}
where
\begin{equation}
({\cal S}_\Gamma^\text{near}\bm\tau)(\x)
:=
\sum_{|n|\leq 1}{\int_\Gamma{\! S({\bf x},{\bf y}+n{\bf d})\bm{\tau}({\bf y})\, ds_{\bf y}}} \quad\text{and}\quad \phi_m (\bfx) = S(\x, \y_m). 
\label{Snr}
\end{equation}
Here, ${\bf d}$ is the lattice vector i.e.\ ${\bf d} = L{\bf e}_1$ and  the source locations $\{\y_m\}_{m=1}^K$ are chosen to be equispaced on a circle enclosing $\Omega$. The associated representation for pressure is given by
\begin{equation}
p \;=\; {\cal P}_\Gamma^\text{near}\bm\tau + \sum_{m=1}^K {\bf c}_m \cdot {\bf \varphi}_m, 
\label{prep}
\end{equation}
where 
\begin{equation}
({\cal P}_\Gamma^\text{near}\bm\tau)({\bf x})
:=
\sum_{|n|\leq 1}{\int_\Gamma{\! Q({\bf x},{\bf y}+n{\bf d})\cdot\bm{\tau}({\bf y})\, ds_{\bf y}}} \quad\text{and}\quad {\bf \varphi}_m (\x) = Q(\x, \y_m). 
\label{Pnr}
\end{equation}
The pair $(\bfu, p)$ as defined by this representation satisfy the Stokes equations since $S$ and $Q$, defined in \eqref{S}, are the Green's functions. The unknown density function $\bm\tau$ and the coefficients $\{{\bf c}_m\}$ are then determined by enforcing the boundary conditions. For the forward problem, applying the conditions in \eqref{eq:forwprb} produces a system of equations in the following form \cite{barnett:16}: 
\begin{equation}
\left[ \begin{array}{cc}
A & B\\
C & D\\
\end{array}\right]
\left[ \begin{array}{c}
{\bm \tau}\\
{\bm c}\\
\end{array}\right]=
\left[ \begin{array}{c}
\bfuD \\
{\bm 0}\\
\end{array} \right].
\label{bsys} \end{equation}
The first row applies the slip condition on $\Gamma$ by taking the limiting value of $\bfu(\bfx)$, defined in \eqref{urep}, as $\x$ approaches $\Gamma$ from the interior. The second row applies the periodic boundary conditions on velocity and traction as defined in \eqref{eq:forwprb}. The operators $A, B, C$ and $D$ are correspondingly defined based on the representation formulas \eqref{urep} and \eqref{prep}. In the case of the adjoint problem, the operators remain the same but the right hand side of \eqref{bsys} is modified according to  the boundary conditions \eqref{eq:adjprb}.  

\begin{remark}\label{SLP:remark}
Note that the representation \eqref{urep} implies that $A$ is a first-kind boundary integral operator and in general is not advisable for large-scale problems due to ill-conditioning of resulting discrete linear systems. However, for the problems we consider here, the dimension of linear system is usually small $\sim$100-200 and we always solve it using direct solvers. For large-scale problems requiring iterative solution (e.g., in peristaltic pumps transporting rigid or deformable particles), well-conditioned systems can be produced from second-kind operators, which can readily be constructed using double-layer potentials as was done in \cite{barnett:16}.
\end{remark}

Finally, we use $M$ quadrature nodes each on $\Gamma^+$ and $\Gamma^-$ to evaluate smooth integrals  using the standard periodic trapezoidal rule and weakly singular integrals, such as \eqref{Snr} evaluated on $\Gamma$, using a spectrally-accurate Nystr\"{o}m method (with periodic Kress corrections for the log singularity, see Sec. 12.3 of \cite{kress1999linear}). Thereby, a discrete linear system equivalent to \eqref{bsys} is obtained, which is solved using a direct solver for the unknowns $\bm\tau$ and $\{{\bf c}_m\}$. 
The traction vector for a given pipe shape, required for evaluating the shape derivatives (Props. \ref{dJ:final}  and \ref{dd:massflow}), can then be obtained by a similar representation as \eqref{urep} but with the kernel replaced by the traction kernel, given by, 
\begin{equation}
T(S, Q)(\x,{\bf y}) \;=\; -\frac{1}{\pi} \frac{(\x - \y)\otimes(\x - \y)}{|\x-\y|^2} \frac{(\x - \y)\cdot{\bf n}^\x}{|\x-\y|^2}.
\label{Tker}
\end{equation}

\section{Results} \label{sc:results}
In this section, we present validation results for our numerical PDE solver and test the performance of our shape optimization algorithm. 
\begin{figure}[htbp] \centering
\includegraphics[width=\textwidth]{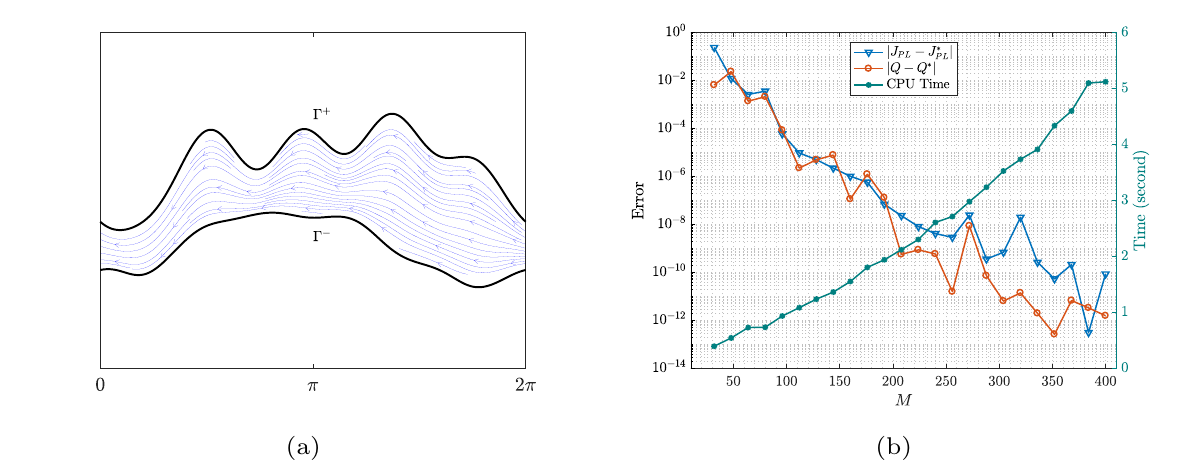}
\caption{(a) An arbitrarily shaped channel and the streamlines of flow induced by a prescribed slip on the walls, as defined in \eqref{forward:PDE}, obtained using our boundary integral solver. (b) Plot of convergence and CPU times as a function of the spatial resolution $M$ used in the forward problem solver. The reference solution (denoted by superscript $^*$) is obtained using $M=1024$. Notice that we get single-precision accuracy ($1e-6$) even with a small number of points ($\sim$100) and corresponding cost of forward solve is less than a second. }
\label{fig:forward}
\end{figure}
First, we demonstrate the performance of our forward problem \eqref{forward:PDE} solver on an arbitrary pump shape as shown in Figure \ref{fig:forward}(a). To illustrate the convergence of the numerical scheme, we consider two scalar quatities: the objective function  $J_{PL}$ and the mass flow rate $Q$, both of which depend on the traction vector obtained by solving the forward problem. In \ref{fig:forward}(b), we show the accuracy of the solver in computing these quantities as well as the CPU time it takes to solve as the number of quadrature points on each of the walls, $M$, is increased. Since we are using a spectrally-accurate quadrature rule, notice that the error decays rapidly and a small number of points are sufficient to achieve six-digit accuracy, which is more than enough for our application. The number of proxy points and the number of collocation points on the side walls are chosen following the analysis of \cite{barnett:16}---both typically are small again owing to the spectral convergence of the method w.r.t these parameters. Consequently, each forward solve takes less than a second to obtain the solution to six-digit accuracy as can be observed from Figure \ref{fig:forward}(b). 

\begin{figure}[htbp] \centering
\includegraphics[width=0.8\textwidth]{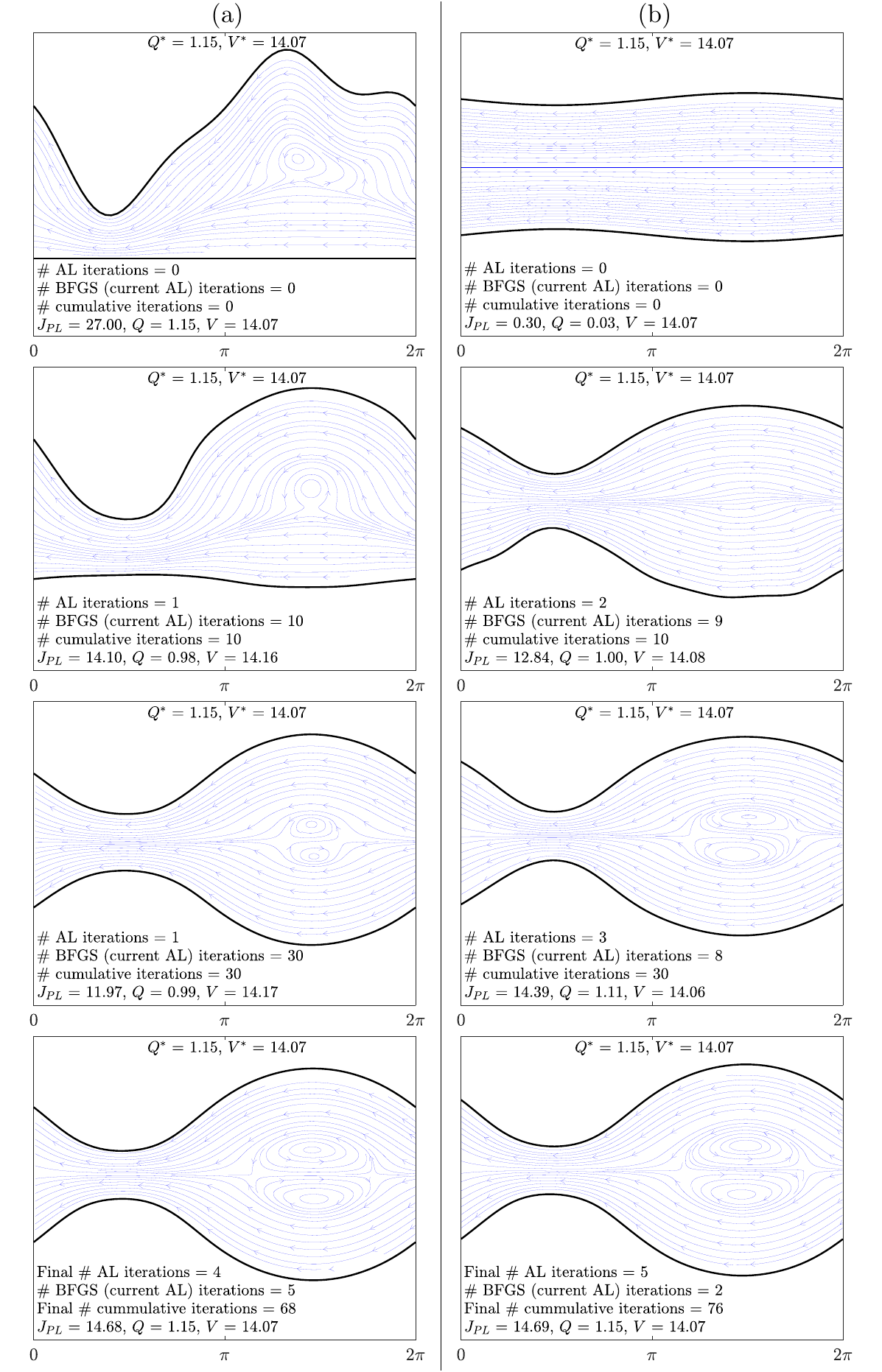}
\caption{Optimizing power loss with mass flow rate and volume constraints, starting with two different initial shapes, (a) a random top wall with a flat bottom wall, and (b) a symmetric slight bump shape. In both cases, the constraints on $Q^*$ and $V^*$ are the same. We observe that both arrive at the same equilibrium shape. }
\label{fig:2shape_iteration}
\end{figure}

Next we consider two different initial pump shapes and perform the shape optimization by imposing the same constraints on the flow rate $Q^*$ and the volume $V^*$. The initial and the intermediary shapes produced by our numerical optimization procedure are shown in Figure \ref{fig:2shape_iteration}.  Based on our earlier analysis of the forward solver, we set $M=64$ and the number of modes $N$ (e.g., in \eqref{eq:Param2a} and \eqref{eq:Param2b}) is set to 5, thereby, the number of the shape design parameters is 41. We use the Augmented Lagrangian approach described in Algorithm \ref{alg:AGM} with the values of the penalty parameters as listed. Each AL iteration requires solving an unconstrained optimization problem, which entails taking several BFGS iterations (steps 5-9 of Algo \ref{alg:AGM}). Both the number of AL iterations and the number of BFGS iterations are shown for each shape update in Figure \ref{fig:2shape_iteration}. In total, it costs 143 solves of the forward and adjoint problems \eqref{eq:forwprb}--\eqref{eq:adjprb} for the case in Fig.~\ref{fig:2shape_iteration}(a) and 197 for the case in Fig.~\ref{fig:2shape_iteration}(b).

We show the evolution of the objective function and the constraints with the iteration index, corresponding to these two test cases, in Fig.~\ref{fig:2shape_JQV}. Due to the fact that we are using an AL approach, the constraints are enforced progressively by increasing the penalty coefficients $\sigma$ and as a result, the objective function approaches a local minimum in a non-monotonic fashion. In the second test case (Fig. \ref{fig:2shape_iteration}b), for instance, $J_{PL}$ increases significantly since the initial shape doesn't satisfy the constraints. On the other hand, in the first test case, the initial values for the $Q$ and $V$, obtained by a forward solve, match the target values $Q^*$ and $V^*$, thereby $J_{PL}$ is reduced as the optimization proceeds, to nearly half of its initial value. While not guaranteed in general (due to the possibility of getting stuck in a local minimum), the final shapes in both cases coincide, which is another validation of our analytic shape sensitivity calculations.   

\begin{figure}[htbp] \centering
\includegraphics[width=\textwidth]{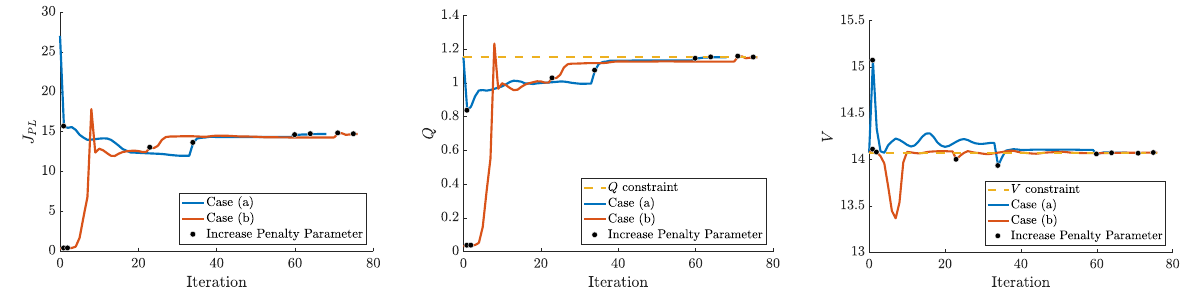}
\caption{Evolution of the power loss, mass flow rate, and volume as a function of the optimization iteration index corresponding to the two test cases shown in Fig.~\ref{fig:2shape_iteration}. Since we are using an Augmented Lagrangian approach, as expected, we can observe that the constraints---of prescribed mass flow rate and volume indicated by dashed lines---are satisfied progressively as the penalty parameters are increased.}
\label{fig:2shape_JQV}
\end{figure} 
 
The equilibrium shapes and their interior fluid flow shown in Figure \ref{fig:2shape_iteration} reaffirm the classical observation \cite{jaffrin1971peristaltic} of {\em trapping} i.e., an enclosed bolus of fluid particles near the center line indicated by the closed streamlines in the waveframe. As is also well-known, trapping occurs beyond a certain pumping range only; in Fig.~\ref{fig:evolve}, we show the optimal shapes obtained by our algorithm at different flow rates but containing the same volume of fluid. Here, we fixed the bottom wall to be flat. Notice that the bolus appears to form for shapes beyond $Q = 0.9$. Moreover, as expected, the optimal value of power loss is higher for higher flow rates with the extreme case of a flat pipe transporting zero net flow with no power loss.  

\begin{figure}[htbp] \centering
\includegraphics[width=0.6\textwidth]{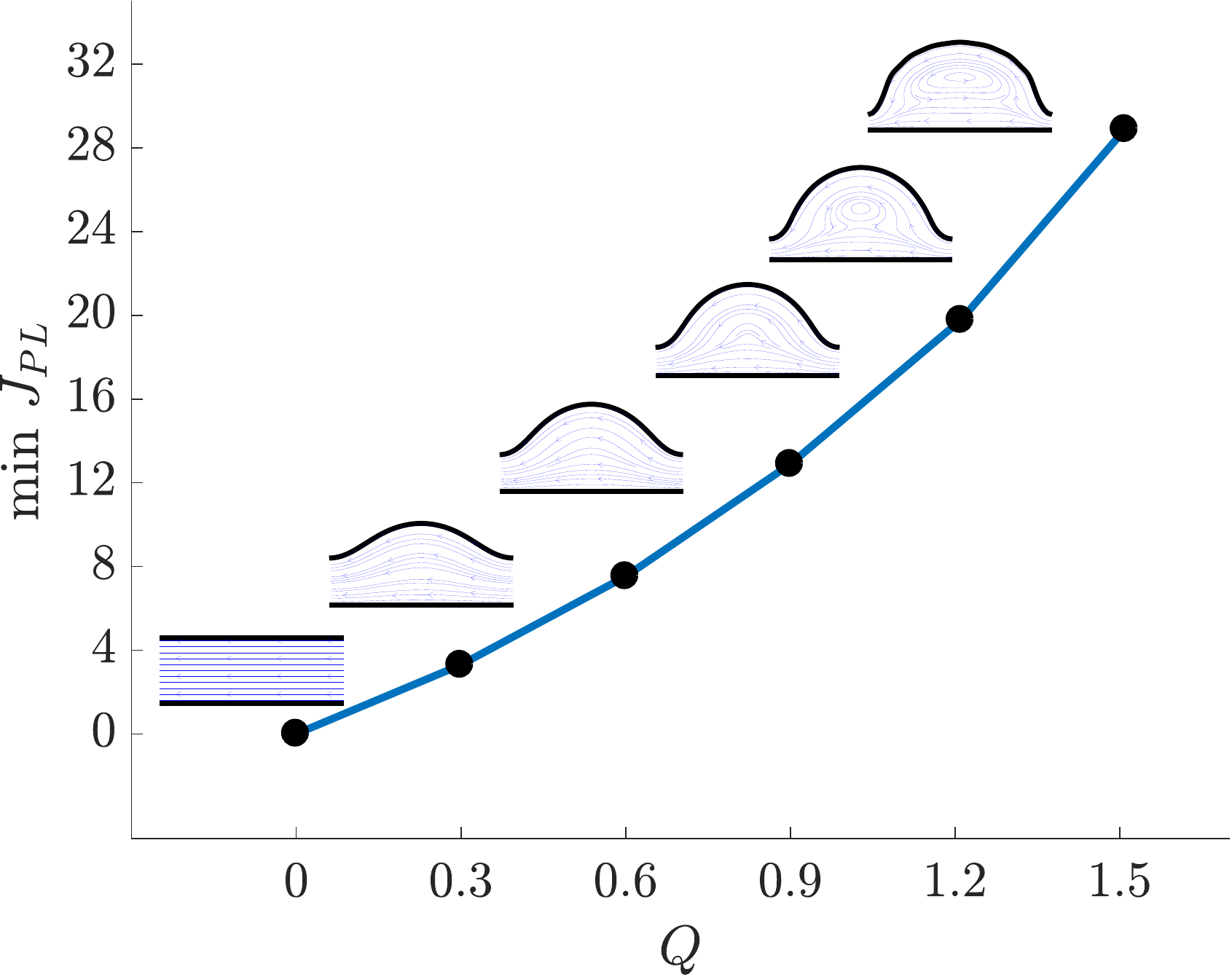}
\caption{Optimal shapes and minimized power losses for varying mass flow rates. The bottom wall is fixed as flat. The constraint of volume is identical for all shapes.}
\label{fig:evolve}
\end{figure}

\section{Conclusions}\label{sc:conclusions}
We derived new analytic formulas for evaluating the shape derivatives of the power loss and the mass flow rate functionals that arise in the shape optimization of Stokesian peristaltic pumps. While we restricted our attention to two-dimensional shapes, extension of these formulas to rotationally-symmetric shapes is rather straightforward. We applied the recently developed periodic boundary integral solver of \cite{barnett:16} to solve the forward/adjoint PDE problems efficiently. 

One of the main directions of our future investigation is to consider optimal shapes for transporting passive (e.g., colloids, bubbles or vesicles) and/or active (e.g., spermatozoa, bacteria) particles in Stokes flow. This is of current interest in both science and technological applications. Clearly, they are more computationally intensive since transient PDEs need to be solved as opposed to the quasi-static ones considered here.  However, particulate flow solvers based on BIEs are proven to be efficient and scalable to simulating millions of deformable particles \cite{petascale}; our work lays the foundation for applying these methods to shape optimization of peristaltic pumps transporting such complex fluids. 

\section{Acknowledgements}
RL and SV acknowledge support from NSF under
grants DMS-1719834 and DMS-1454010. The work of SV was also supported by the Flatiron Institute (USA), a division of Simons Foundation, and by the {\em Fondation Math\'{e}matique Jacques Hadamard} (France).

\appendix
\section{Proofs}
\subsection{Proof of Lemma~\ref{lm4}}
\label{app}

We use the Frenet formulas~\eqref{frenet} and associated conventions. To evaluate $\dd{\bfu}{}\Dsup$, we let $\G$ depend on the fictitious time $\eta$, setting
\begin{equation}
\G_{\eta}\ni\bfx_{\eta}(s) = \bfx(s)+\eta\bfth(s) \qquad(0\shleq s\shleq \ell), \label{wall:shape:eta:2D}
\end{equation}
(where $\G$ stands for $\G^+$ or $\G^-$, and likewise for $\ell$) and seek the relevant derivatives w.r.t. $\eta$ evaluated at $\eta=0$. Note that for $\eta\not=0$, $s$ is no longer the arclength coordinate along $\G_{\eta}$, and $\partial_s\bfx_{\eta}$ is no longer of unit norm; moreover, the length of $\G_{\eta}$ depends on $\eta$. The wall velocity $\bfU=(c\ell/L)\bftau$ for varying $\eta$ is then given by
\begin{equation}
\bfU_{\eta}(s) = \frac{c\ell_{\eta}}{Lg_{\eta}}\partial_s\bfx_{\eta}  \qquad(0\shleq s\shleq \ell),
\label{U:def:eta:2D}
\end{equation}
having set $g_{\eta}=|\partial_s\bfx_{\eta}|$ (note that $g_0=1$). Our task is to evaluate $\text{d}/\text{d}\eta\,\bfU_{\eta}(s)$ at $\eta=0$. We begin by observing that the derivative of $g$ is (since $\partial_s\bfx_{\eta}=\bftau$ and $g=1$ for $\eta=0$)
\[
  \partial_{\eta}g
   = (\partial_s\bfx_{\eta}\sip\partial_{\eta s}\bfx_{\eta})/g = \bftau\sip\partial_s\bfth
   = \partial_s\theta_s - \kappa\theta_n
\]
and the length $\ell_{\eta}$ of $\G_{\eta}$ and its derivative $\dd{\ell}$ are given (noting that $s$ spans the fixed interval $[0,\ell]$ for all curves $\G_{\eta}$) by
\begin{equation}
  \text{(i) \ } \ell_{\eta} = \int_{0}^{\ell} g_{\eta} \ds, \qquad
  \text{(ii) \ } \dd{\ell} = \int_{0}^{\ell} (\partial_s\theta_s - \kappa\theta_n) \ds
 = -\int_{0}^{\ell} \kappa\theta_n \ds.
\label{dd:ell:2D}
\end{equation}
The last equality in (ii), which results from the assumed periodicity of $\bfth$, proves item (b) of the lemma.

Using these identities in~\eqref{U:def:eta:2D}, the sought derivative $\dd{\bfu}{}\Dsup$ of the wall velocity is found as
\begin{equation}
 \dd{\bfu}{}\Dsup = \partial_{\eta}\bfU_{\eta}(s)\Big|_{\eta=0}
 = \frac{c}{L} \lpar \dd{\ell} - \ell(\partial_s\theta_s - \kappa\theta_n) \rpar\bftau
 + \frac{c}{L}\ell\partial_{s}\bfth
 = \frac{c\dd{\ell}}{L}\bftau + \frac{c\ell}{L}(\partial_s\theta_n + \kappa\theta_s)\bfn, \label{dd:uD:2D}
\end{equation}
thus establishing item (a) of the lemma. The proof of Lemma~\ref{lm4} is complete.

\subsection{Proof of Lemma~\ref{lm5}}
\label{lm5:proof}

The proof proceeds by verification, and rests on evaluating $\Div\bfA$, with the vector function $\bfA$ defined by
\[
  \bfA := (\bfsig[\bfu,p]\dip\bfD[\bfuh])\bfth - \bfsig[\bfu,p]\sip\bfna\bfuh\sip\bfth - \bfsig[\bfuh,\hatp]\sip\bfna\bfu\sip\bfth
\]
First, it is easy to check, e.g. using component notation relative to a Cartesian frame, that
\begin{subequations}
\begin{align}
  \Div\lsqb(\bfsig[\bfu,p]\dip\bfD[\bfuh])\bfth\rsqb
 &= (\bfna\bfsig[\bfu,p]\sip\bfth)\dip\bfD[\bfuh] + \bfsig[\bfu,p]\dip(\bfna\bfD[\bfuh]\sip\bfth)
  + (\bfsig[\bfu,p]\dip\bfD[\bfuh])\Div\bfth \label{aux1b} \\
  \Div\lsqb\bfsig[\bfu,p]\sip\bfna\bfuh\sip\bfth\rsqb
 &= (\Div\bfsig[\bfu,p])\sip(\bfna\bfuh\sip\bfth) + \bfsig[\bfu,p]\dip(\bfna\bfuh\sip\bfna\bfth)
   + \bfsig[\bfu,p]\dip(\bfna\bfD[\bfuh]\sip\bfth) \label{aux2b}
\end{align}
\end{subequations}
Next, invoking the constitutive relation~(\ref{forward:pde}b) for both states, one has
\begin{align}
  \bfsig[\bfu,p]\dip(\bfna\bfD[\bfuh]\sip\bfth)
 &= -p\bfna(\Div\bfuh)\sip\bfth + 2\mu\bfD[\bfu]\dip(\bfna\bfD[\bfuh]\sip\bfth) \\
  (\bfna\bfsig[\bfuh,\hatp]\sip\bfth)\dip\bfD[\bfu]
 &= -(\bfna\hatp\sip\bfth)\Div\bfu + 2\mu(\bfna\bfD[\bfuh]\sip\bfth)\dip\bfD[\bfu].
\end{align}
i.e. (using incompressibility)
\begin{equation}
  \bfsig[\bfu,p]\dip(\bfna\bfD[\bfuh]\sip\bfth) = (\bfna\bfsig[\bfuh,\hatp]\sip\bfth)\dip\bfD[\bfu]. \label{aux3b}
\end{equation}
Finally, using~\eqref{aux3b} in~\eqref{aux1b} and the balance equation in~\eqref{aux2b}, together with the corresponding identities obtained by switching $(\bfu,p)$ and $(\bfuh,\hatp)$, one obtains
\begin{align}
  \Div\bfA &= \Div\lsqb (\bfsig[\bfu,p]\dip\bfD[\bfuh])\bfth - \bfsig[\bfu,p]\sip\bfna\bfuh\sip\bfth - \bfsig[\bfuh,\hatp]\sip\bfna\bfu\sip\bfth \rsqb \\
 &= \Div\lsqb \tdemi (\bfsig[\bfu,p]\dip\bfD[\bfuh])\bfth + \tdemi (\bfsig[\bfuh,\hatp]\dip\bfD[\bfu])\bfth
    - \bfsig[\bfu,p]\sip\bfna\bfuh\sip\bfth - \bfsig[\bfuh,\hatp]\sip\bfna\bfu\sip\bfth \rsqb \\
 &= \tdemi \lsqb \bfsig[\bfu,p]\dip\bfD[\bfuh] + \bfsig[\bfuh,\hatp]\dip\bfD[\bfu] \rsqb \Div\bfth
  - \bfsig[\bfu,p]\dip(\bfna\bfuh\sip\bfna\bfth) - \bfsig[\bfuh,\hatp]\dip(\bfna\bfu\sip\bfna\bfth) \\
 &= 2\mu(\bfD[\bfu]\dip\bfD[\bfuh])\Div\bfth - 2\mu\bfD[\bfu]\dip(\bfna\bfuh\sip\bfna\bfth) - 2\mu\bfD[\bfuh]\dip(\bfna\bfu\sip\bfna\bfth)
   + (p\bfna\bfuh + \hatp\bfna\bfu)\dip\bfna\bfth
\end{align}
(with the second equality stemming from $\bfsig[\bfu,p]\dip\bfD[\bfuh]=2\mu\bfD[\bfu]\dip\bfD[\bfuh]=2\mu\bfD[\bfuh]\dip\bfD[\bfu]=\bfsig[\bfuh,\hatp]\dip\bfD[\bfu]$). Using definitions~(\ref{a1:def},b) of $a^1$ and $b^1$, we therefore observe that
\[
  a^1(\bfu,\bfuh,\bfth) - b^1(\bfu,\hatp,\bfth) - b^1(\bfuh,p,\bfth) = \iO \Div\bfA \dV.
\]
The last step consists of applying the first Green identity (divergence theorem) to the above integral. The Lemma follows, with the contribution of the end section $\G_L$ therein stemming from condition (ii) in~\eqref{Theta:def} and the periodicity conditions at the end sections. The latter hold by assumption for both $\bfu$ and $\bfuh$, and the interior regularity of solutions in the whole channel then implies the same periodicity for $\bfna\bfu$ and $\bfna\bfuh$; moreover, periodicity is also assumed for $p$ (but not necessarily for $\hatp$) as well as for $\bfth$.

\subsection{Proof of Lemmas~\ref{forward:wall} and~\ref{adjoint:wall}}
\label{proof:lemmas}

Let points $\bfx$ in a tubular neighborhood $V$ of $\G$ be given in terms of curvilinear coordinates $(s,z)$, so that
\begin{equation}
  \bfx = \bfx(s) + z\bfn(s),
\end{equation}
and let $\bfv(\bfx)=v_s(s,z)\bftau(s)\shp v_n(s,z)\bfn(s)$ denote a generic vector field in $V$. Then, at any point $\bfx\sheq\bfx(s)$ of $\G$, we have
\begin{align}
  \bfna\bfv
 &= \lpar \partial_s v_s \shm \kappa v_n \rpar\bftau\tens\bftau + \lpar \partial_s v_n \shp \kappa v_s \rpar\bfn\tens\bftau
   + \partial_n v_s\bftau\tens\bfn + \partial_n v_n\bfn\tens\bfn, \\
  \Div\bfv
 &= \partial_s v_s - \kappa v_n + \partial_n v_n.
\end{align}
Assuming incompressibility, the condition $\Div\bfv\sheq0$ can be used for eliminating $\partial_n v_n$ and we obtain
\begin{equation}
  \bfna\bfv
 = \lpar \partial_s v_s \shm \kappa v_n \rpar \lpar \bftau\tens\bftau \shm \bfn\tens\bfn \rpar + \lpar \partial_s v_n \shp \kappa v_s \rpar\bfn\tens\bftau + \partial_n v_s\bftau\tens\bfn, \label{D[u]:inc}
\end{equation}
Recalling now that the forward and adjoint solutions respectively satisfy $\bfu=(c\ell/L)\bftau$ and $\bfuh\sheq\bfze$ on $\G$, and that $2\bfD[\bfv]=\bfna\bfv+\bfna\bfv\Tsup$, we have
\begin{equation}
\begin{aligned}
  \bfna\bfu &= \frac{\kappa c\ell}{L}\bfn\tens\bftau + \partial_n u_s\bftau\tens\bfn \quad&
  2\bfD[\bfu] &= \Lpar \frac{\kappa c\ell}{L} + \partial_n u_s \Rpar \lpar \bfn\tens\bftau \shp \bftau\tens\bfn \rpar \\
  \bfna\bfuh &= \partial_n \hatu_s\bftau\tens\bfn \quad&
  2\bfD[\bfuh] &= \partial_n \hatu_s \lpar \bfn\tens\bftau \shp \bftau\tens\bfn \rpar
\end{aligned} \ \Biggr\} \qquad \text{on $\G$.}\label{aux6}
\end{equation}
The corresponding stress vectors $\bff=-p\bfn+2\mu\bfD[\bfu]\sip\bfn$ and $\bffh=-\hatp\bfn+2\mu\bfD[\bfuh]\sip\bfn$ on $\G$ are found as
\begin{equation}
  \bff = -p\bfn + f_s\bftau, \quad \bffh = -\hatp\bfn + \fhat_s\bftau \qquad\text{with}\quad
  f_s = \mu\Lpar \frac{\kappa c\ell}{L} + \partial_n u_s \Rpar, \quad
  \fhat_s = \mu\partial_n \hatu_s,
\end{equation}
which in particular prove items (c) of Lemmas~\ref{forward:wall} and~\ref{adjoint:wall}. Finally,  using the above in~\eqref{aux6} establishes the remaining items (a), (b) of both lemmas.

\bibliography{refs_marc}
\bibliographystyle{plain}
\end{document}